\theoremstyle{definition}
\newtheorem{definition}{Definition}
\theoremstyle{plain}
\newtheorem{theorem}{Theorem}
\newtheorem{lemma}{Lemma}
\newtheorem{prop}{Proposition}
\newtheorem*{conjecture*}{Conjecture}
\newtheorem*{fact*}{Fact}
\theoremstyle{remark}
\newtheorem{remark}{Remark}
\newcommand{\abs}[1]{\left\lvert #1 \right\rvert}
\newcommand{\norm}[1]{\lVert #1 \rVert}
\newcommand{\set}[1]{\left\{\, #1 \,\right\}}
\newcommand{\bracket}[1]{\langle #1 \rangle}
\newcommand{\Real}{\mathbf{R}}
\newcommand{\Rd}{\mathbf{R}^d}
\newcommand{\Integ}{\mathbf{Z}}
\newcommand{\Tor}{\mathbf{T}}
\newcommand{\Sph}{\mathbf{S}}
\newcommand{\imply}{\Rightarrow}
\newcommand{\conv}{\star}
\newcommand{\abss}[1]{\lvert #1 \rvert}
\newcommand{\eps}{\varepsilon}
\let\div\relax
\DeclareMathOperator{\div}{div}
\DeclareSymbolFont{bbold}{U}{bbold}{m}{n}
\DeclareSymbolFontAlphabet{\mathbbold}{bbold}
\newcommand{\ind}{\mathbbold{1}}
\def\XXint#1#2#3{{\setbox0=\hbox{$#1{#2#3}{\int}$ }
\vcenter{\hbox{$#2#3$ }}\kern-.6\wd0}}
\newcommand{\chieps}{\chi_{\eps}}
\newcommand{\normlinf}[1]{\norm{#1}_{L^{\infty}}}
\newcommand{\normlone}[1]{\norm{#1}_{L^1}}
\newcommand{\normltwo}[1]{\norm{#1}_{L^2}}
\newcommand{\normlp}[1]{\norm{#1}_{L^p}}
\newcommand{\normhms}[1]{\norm{#1}_{\dot H^{-s}}}
\newcommand{\normhmo}[1]{\norm{#1}_{\dot H^{-1}}}
\newcommand{\calW}{\mathcal{W}}
\newcommand{\calV}{\mathcal{V}}
\newcommand{\FT}{\mathcal{F}}
\newcommand{\Schwartz}{\mathcal{S}}
\renewcommand{\th}{\theta}
\newcommand{\om}{\omega}
\title{A new approach to bounds on mixing}
\author{Flavien Léger}
\begin{document}
\maketitle

\begin{abstract}
We consider mixing by incompressible flows. In 2003, Bressan stated a conjecture concerning a bound on the mixing achieved by the flow in terms of an $L^1$ norm of the velocity field. Existing results in the literature use an $L^p$ norm with $p>1$. In this paper we introduce a new approach to prove such results. It recovers most of the existing results and offers new perspective on the problem. Our approach makes use of a recent harmonic analysis estimate from Seeger, Smart and Street. 
\end{abstract}

\section{Introduction}

Consider a passive scalar $\th$ which is advected by a time-dependent and divergence-free velocity field $u$ on the whole space $\Rd$. $\th$ might assign labels to the fluid particles, or represent the concentration of some scalar quantity. $\th$ and $u$ then satisfy the equations
\begin{align}
\begin{cases}\label{eq:conservation_of_mass}
\partial_t\th+\div(u\th) = 0\\
\div(u) = 0 & \text{on } [0,+\infty)\times\Rd\\
\th(0,\cdot) = \th_0 & \text{on } \Rd
\end{cases}
\end{align}

In~\cite{bressan}, Bressan stated a conjecture relating a bound on the mixing achieved by the flow in terms of an $L^1$ norm of the velocity field:
\begin{conjecture*}[Bressan,~\cite{bressan}]
For the geometric mixing scale $\eps(t)$ of $\th(t,\cdot)$ (see def.~\ref{def:geometric_mixing_scale} in Section~\ref{sec:mixing_def}) there exists a constant $C>0$ depending on $\th_0$ such that
\begin{equation*}
\eps(t)\ge C^{-1} \exp\left(-C\int_0^t\norm{\nabla u(t',\cdot)}_1 dt'\right)
\end{equation*}
\end{conjecture*}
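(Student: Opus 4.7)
The plan is to work in the Lagrangian picture. Since $\div u = 0$, the flow map $X_t$ of $u$ is measure-preserving and satisfies $\th(t, X_t(x)) = \th_0(x)$. Bounding the geometric mixing scale $\eps(t)$ from below is therefore equivalent to showing that $X_t$ cannot disperse initially close points too quickly: if the pre-images under $X_t$ of small balls remain sufficiently localized in space, level sets of $\th_0$ cannot become finely interlaced and full mixing at scale $\eps(t)$ is prevented.

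Following the Crippa--De Lellis strategy for quantitative Lagrangian estimates, I would track a logarithmic functional such as
\[
\Phi(t) = \int_{B_R} \int_{B_R} \log\!\left( 1 + \frac{1}{\abs{X_t(x) - X_t(y)} + \delta}\right) dx \, dy
\]
and differentiate in time. The standard pointwise inequality $\abs{u(a) - u(b)} \lesssim \abs{a-b}\bigl(M(\nabla u)(a) + M(\nabla u)(b)\bigr)$, together with the measure-preservation of $X_t$, formally leads to
\[
\Phi'(t) \lesssim \int_{\Rd} M(\nabla u(t,\cdot))(z)\, dz.
\]
A Chebyshev argument on a sublevel set of $\Phi(t)$ then converts a Gronwall bound on $\Phi$ into the desired exponential lower bound on $\eps(t)$.

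The main obstacle sits in the displayed estimate: the Hardy--Littlewood maximal operator is not of strong type $(1,1)$, so $\int M(\nabla u)$ cannot be controlled by $\norm{\nabla u}_1$. This failure at the $L^1$ endpoint is precisely why existing proofs require $p > 1$ and why the conjecture has resisted. The new ingredient, advertised in the abstract, is to notice that the increment $u(X_t(x)) - u(X_t(y))$ need only be estimated along the segment joining $X_t(x)$ and $X_t(y)$, producing a directional rather than isotropic maximal average. The Seeger--Smart--Street restricted, Kakeya-type maximal inequality furnishes an endpoint substitute for Hardy--Littlewood that is compatible with this tube geometry, and this is the object I would insert in place of $M(\nabla u)$ in the estimate for $\Phi'(t)$.

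Once the Seeger--Smart--Street bound is in place, the remainder of the argument is Crippa--De Lellis bookkeeping adapted to the endpoint: exponentiate the Gronwall inequality, pass from $\Phi$ to a measure bound on the set of badly separated particles, and translate this into the lower bound on the geometric mixing scale with the exponential dependence stated by Bressan. I expect the harmonic-analytic replacement to be the only non-routine step; any residual logarithmic losses in the Seeger--Smart--Street inequality (compared with a hypothetical $L^1$ boundedness of $M$) would explain why the paper describes its method as ``recovering most of the existing results'' rather than settling the full conjecture.
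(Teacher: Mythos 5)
You are attempting to prove Bressan's conjecture itself, but the paper does not prove this statement: it explicitly records it as open (``To the best of our knowledge this conjecture is still open'') and states that its method ``doesn't seem to be effective to deal with the $L^1$ case.'' Every result actually proved in the paper is at the level of $\norm{\nabla u(t,\cdot)}_p$ with $p>1$. So there is no proof to compare against, and your proposal, were it correct, would resolve a well-known open problem. It is not correct, and the failure sits precisely at the step you advertise as the new ingredient.

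The concrete gap is your claim that the Seeger--Smart--Street estimate furnishes ``an endpoint substitute for Hardy--Littlewood.'' It does not. The result of~\cite{seeger} is a H\"older-type bound for multilinear singular integral forms (first-order $d$-commutators) of the shape
\begin{equation*}
\iint \th(x)\,\th(y)\, m_{x,y}\big(\nabla u\big) : K(x-y)\,dx\,dy \;\le\; C(d,p)\,\normlinf{\th}\,\norm{\th}_{L^{p'}}\,\normlp{\nabla u},
\end{equation*}
valid only for $p>1$; it is not a maximal-function inequality, directional or otherwise, and it provides nothing at the $L^1$ endpoint. The obstruction you correctly identify --- that $\int M(\nabla u)$ is not controlled by $\norm{\nabla u}_1$ --- is therefore not repaired, and your Gronwall step $\Phi'(t)\lesssim \int M(\nabla u)$ still forces $p>1$, collapsing the argument back to the known Crippa--De Lellis result rather than the conjecture. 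Note also that the paper's actual use of~\cite{seeger} is entirely different from your Lagrangian scheme: it is Eulerian, introducing $\calV(f)=\int \log\abs{\xi}\,\abss{\hat f(\xi)}^2\,d\xi$, computing $\frac{d}{dt}\calV\big(\th(t,\cdot)\big)$ as a principal-value trilinear form in $\th$, $\th$ and $\nabla u$, and applying the commutator estimate to obtain linear growth of $\calV$ --- again only for $p>1$.
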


To the best of our knowledge this conjecture is still open. However, starting from work by Crippa and De Lellis~\cite{crippadelellis}, there have been several related results bounding mixing in terms of the $L^p$ norm in space $\int_0^t\norm{\nabla u(t',\cdot)}_p dt'$ with $p>1$. 

This paper develops a new approach to proving bounds on mixing. It recovers many of the known results, and (being different from the previous proofs) offers new perspective on the problem. Unfortunately, this method doesn't seem to be effective to deal with the $L^1$ case. 

To describe our results, we begin by introducing the following functional (this is different from, but somewhat analogous to, the functionals discussed in~\cite{crippadelellis},\cite{jabin},\cite{seeger},\cite{seegermixing})
\begin{equation*}
\calV(f) = \int_{\Rd} \log{\abs{\xi}}\,\abss{\hat{f}(\xi)}^2\,d\xi
\end{equation*}
where $\hat{f}$ denotes the Fourier transform of $f$. $\calV(f)$ captures a logarithm of a derivative of $f$. 

Our main result (Theorem~\ref{thm:mainthm}) says, roughly speaking, that if the velocity field $u$ is bounded in $\dot{W}^{1,p}$ uniformly in time for some $p>1$, then $\calV\big(\th(t,\cdot)\big)$ grows at most linearly

\begin{equation}\label{eq:growth_of_v}
\calV\big(\th(t,\cdot)\big) \le C\, (1 + t)
\end{equation}

We also offer an additional, related result (Theorem~\ref{thm:W}). Consider the functional
\begin{equation*}
\calW(f)=\int_{\Rd} (\log\abs{\xi})^2\,\abss{\hat{f}(\xi)}^2\,d\xi
\end{equation*}
We show that if the velocity field $u$ is bounded in $\dot{W}^{1,q}$ uniformly in time for some $q\ge 2$, then $\calW\big(\th(t,\cdot)\big)$ grows at most quadratically
\begin{equation*}
\calW\big(\th(t,\cdot)\big) \le C(1+t)^2
\end{equation*}

Our results rely crucially on a harmonic analysis estimate recently proved by Seeger, Smart and Street~\cite{seeger}. It is clear from~\cite{seeger} that the results there are related to and motivated by Bressan's conjecture. However~\cite{seeger} does not include much information about these connections. 

Proving linear growth of $\calV\big(\th(t,\cdot)\big)$ recovers most of the existing results in the literature. We discuss this in detail in Section~\ref{sec:discussion}, giving just a brief summary here. A popular choice to measure mixing is the $\dot{H}^{-1}$ norm of $\th(t,\cdot)$ or more generally any $\dot{H}^{-s}$ norm for $s>0$ (see for instance~\cite{mathew2005multiscale} and the review article~\cite{thiffeault}). It was shown in~\cite{ikx} and~\cite{seis} that a uniform in time control of $\norm{\nabla u(t,\cdot)}_p$ implies an exponential lower bound on the $\dot{H}^{-1}$ norm of $\th(t,\cdot)$
\[
\normhmo{\th(t,\cdot)} \ge C^{-1} \exp(-Ct)
\]
This exponential decay can be recovered by our main theorem coupled to the simple convexity inequality (see Prop.~\ref{prop:exp_decay_functional} in Sect.~\ref{sec:recover_results})
\[
\normhms{f}/\normltwo{f} \ge \exp\big(\!-s\,\calV(f)/\normltwo{f}^2\big)
\]
Note that this gives us access to any $\dot{H}^{-s}$ norm for $s>0$.

To give a more precise statement of our main result we turn to a more precise description of the problem.

\section{Statement of main result}
Consider the functional $\calV$, defined on functions $f$ in the Schwartz class by
\begin{equation*}
\calV(f) = \int_{\Rd} \log{\abs{\xi}}\,\abss{\hat{f}(\xi)}^2\,d\xi
\end{equation*}
where $\hat{f}$ denotes the Fourier transform of $f$. This can be written in physical space (see Lemma~\ref{lemma:V_physical_space})
\begin{equation*}
\calV(f) = \alpha_d\left(\frac{1}{2}\iint_{\abs{x-y} \le 1} \frac{\abs{f(x)-f(y)}^2}{\abs{x-y}^d}\,dx\,dy - \iint_{\abs{x-y}>1} \frac{f(x)f(y)}{\abs{x-y}^d}\,dx\,dy\right) + \beta_d \normltwo{f}^2
\end{equation*}
where $\alpha_d,\beta_d$ are constants and $\alpha_d>0$.

Let $\th$ be a scalar quantity passively advected by a smooth divergence-free time-dependent velocity field $u$ (equation~\eqref{eq:conservation_of_mass}). A careful computation (see Lemma~\ref{lemma:time_derivative_V}) shows that the time-derivative of $\calV\big(\th(t,\cdot)\big)$ is
\begin{equation} \label{eq:time_derivative_V}
\frac{d}{dt}\calV\big(\th(t,\cdot)\big) = c_d\; \text{PV}\iint \th(t,x)\th(t,y)\big(u(t,x)-u(t,y)\big)\cdot\frac{x-y}{\abs{x-y}^{d+2}}\,dx\,dy
\end{equation}
where the principal value means taking the limit $\eps\to 0$ of the integral over the domain $\abs{x-y}>\eps$ in $\Rd\times\Rd$. 

The time-derivative of $\calV\big(\th(t,\cdot)\big)$ can be written as a trilinear form in $\th(t,\cdot)$, $\th(t,\cdot)$ and $\nabla u(t,\cdot)$. Using the harmonic analysis estimate from Seeger et al.~\cite{seeger}, we deduce that the right hand side of~\eqref{eq:time_derivative_V} can be bounded by
\begin{equation*}
C(d,p)\,\norm{\th(t,\cdot)}_{\infty}\,\norm{\th(t,\cdot)}_{p'}\,\norm{\nabla u(t,\cdot)}_p
\end{equation*}
with $p>1$ and $1/p+1/p'=1$. We now state our main theorem.

\begin{theorem}\label{thm:mainthm}
Let $1<p\le\infty$ and $p'$ the dual Hölder exponent $p/(p-1)$. There exists a constant $C>0$ depending only on $p$ and the dimension $d$ such that
\begin{enumerate}[label={\alph*)}]
\item $\abs{d/dt\,\calV\big(\th(t,\cdot)\big)}\le C \norm{\th_0}_{\infty} \norm{\th_0}_{p'} \norm{\nabla u(t,\cdot)}_p$
\end{enumerate}
where $\norm{\th_0}_{\infty}=\normlinf{\th_0}$, etc. As consequences:
\begin{enumerate}[label={\alph*)},resume]
\item If $\th_0\in L^1\cap L^{\infty}$ and $u$ is bounded in $\dot{W}^{1,p}$ uniformly in time, then $\calV\big(\th(t,\cdot)\big)$ grows at most linearly;
\item More generally, $\calV\big(\th(t,\cdot)\big) - \calV\big(\th_0\big) \le C\norm{\th_0}_{\infty}\norm{\th_0}_{p'}\displaystyle\int_0^t \norm{\nabla u(t',\cdot)}_p\,dt'$
\end{enumerate}
\end{theorem}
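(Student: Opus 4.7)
The starting point is the identity~\eqref{eq:time_derivative_V}, which expresses $\frac{d}{dt}\calV(\th(t,\cdot))$ as a trilinear form acting on $\th(t,\cdot)$, $\th(t,\cdot)$ and $u(t,\cdot)$ with singular kernel $K(z)=z/\abs{z}^{d+2}$. My plan is therefore to (i) show that this trilinear form can be controlled by $C\,\norm{\th(t,\cdot)}_{\infty}\,\norm{\th(t,\cdot)}_{p'}\,\norm{\nabla u(t,\cdot)}_p$ using the Seeger--Smart--Street estimate, and then (ii) replace the time-dependent $L^q$ norms of $\th(t,\cdot)$ by the corresponding norms of $\th_0$ using the fact that the flow is measure-preserving. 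Parts (b) and (c) are then immediate consequences obtained by integrating in time.

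For step (i), the right-hand side of~\eqref{eq:time_derivative_V} has the characteristic structure $\mathrm{PV}\iint \th(x)\th(y)\,(u(x)-u(y))\cdot K(x-y)\,dx\,dy$. The difference $u(x)-u(y)$ against the kernel $z\,\abs{z}^{-d-2}$ (odd, homogeneous of degree $-d-1$) encodes a first derivative of $u$, which is the form in which~\cite{seeger} establishes its trilinear bound. Concretely, after rewriting the integrand by a standard symmetrization in $(x,y)$ and identifying the bilinear singular operator acting on $\th$ (parametrized by the vector field $u$), one falls within the scope of the Seeger--Smart--Street trilinear estimate, yielding
\[
\abs{\tfrac{d}{dt}\calV(\th(t,\cdot))} \le C(d,p)\,\norm{\th(t,\cdot)}_{\infty}\,\norm{\th(t,\cdot)}_{p'}\,\norm{\nabla u(t,\cdot)}_p
\]
for all $1<p\le \infty$. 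I expect this step, in particular the verification that the kernel $K$ fits the hypotheses of~\cite{seeger} (and the passage through the principal value), to be the main technical obstacle; the rest of the argument is essentially bookkeeping.

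For step (ii), since $u$ is divergence-free and smooth, the transport equation~\eqref{eq:conservation_of_mass} preserves all Lebesgue norms of $\th$, so $\norm{\th(t,\cdot)}_q=\norm{\th_0}_q$ for every $q\in[1,\infty]$. Substituting $q=\infty$ and $q=p'$ into the bound of step (i) yields statement (a). Statement (c) follows by integrating (a) from $0$ to $t$, and (b) is the special case of (c) when $\norm{\nabla u(t,\cdot)}_p$ is uniformly bounded in time.
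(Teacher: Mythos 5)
Your plan follows the same route as the paper: start from Lemma~\ref{lemma:time_derivative_V}, bound the resulting trilinear form via the Seeger--Smart--Street estimate, use that the divergence-free flow preserves all $L^q$ norms of $\th$, and integrate in time for (b) and (c). Step (ii) and the deductions of (b) and (c) are correct and exactly what the paper does.

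The issue is that the one step you defer --- ``the verification that the kernel $K$ fits the hypotheses of \cite{seeger}'' --- is essentially the entire content of the paper's proof of part (a), and the mechanism you sketch (``a standard symmetrization in $(x,y)$'') is not what accomplishes it. The actual reduction has two ingredients. First, one writes $u(x)-u(y)=\int_0^1 \nabla u\big((1-s)x+sy\big)(x-y)\,ds$, so that the trilinear form takes the first-order $d$-commutator shape $\iint \th(x)\th(y)\, m_{x,y}(\nabla u):\frac{(x-y)\otimes(x-y)}{\abs{x-y}^{d+2}}\,dx\,dy$, with the averaged gradient $m_{x,y}(\nabla u)$ appearing as the third input. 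Second --- and this is the point a naive reduction misses --- the matrix kernel $(h\otimes h)/\abs{h}^{d+2}$ is \emph{not} a Calder\'on--Zygmund kernel: its diagonal entries are nonnegative and have no cancellation on spheres, so the theorem of \cite{seeger} does not apply to it directly. The divergence-free condition, in the form $\nabla u : I = 0$, allows one to subtract the trace part for free and replace the kernel by $K(h)=c_d\big(h\otimes h-\tfrac{1}{d}\abs{h}^2 I\big)/\abs{h}^{d+2}$, each entry of which has vanishing mean on spheres and is a genuine Calder\'on--Zygmund kernel. Only after this subtraction does \cite{seeger} yield $\abs{d\calV/dt}\le C(d,p)\,\normlinf{\th}\,\norm{\th}_{p'}\,\normlp{\nabla u}$. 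Your outline is therefore the right one, but you must carry out this reduction explicitly; as written, the argument would stall at the application of the harmonic analysis estimate.
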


We consider now another functional 
\begin{equation*}
\calW(f) = \int_{\Rd} (\log{\abs{\xi}})^2\,\abss{\hat{f}(\xi)}^2\,d\xi
\end{equation*}
which provides slightly stronger control of the high frequencies. We have the following bounds for $\calW$:
\begin{theorem} \label{thm:W}
Let $2\le q\le \infty$ and $2\le\tilde{q}\le\infty$ be such that $1/q+1/\tilde{q} = 1/2$. There exists a constant $C>0$ depending only on the dimension $d$ such that
\begin{enumerate}[label={\alph*)}]
\item $\abs{d/dt\sqrt{\calW\big(\th(t,\cdot)\big)}} \le C \norm{\th_0}_{\tilde{q}} \norm{\nabla u(t,\cdot)}_q$
\end{enumerate}
where $\norm{\th_0}_{\tilde{q}}=\norm{\th_0}_{L^{\tilde{q}}}$, etc. As a consequence:
\begin{enumerate}[label={\alph*)},resume]
\item If $\th_0\in L^2\cap L^{\infty}$ and $u$ is bounded in $\dot{W}^{1,q}$ uniformly in time, then $\calW\big(\th(t,\cdot)\big)$ grows at most quadratically.
\end{enumerate}
\end{theorem}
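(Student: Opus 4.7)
The plan is to mirror the proof of Theorem~\ref{thm:mainthm}, but to exploit the squared symbol $(\log\abs{\xi})^2$ so that one factor of $L\th$ can be absorbed into $\sqrt{\calW(\th)}$ via Cauchy--Schwarz, leaving a bilinear commutator estimate in place of the trilinear form bound. Concretely, write $\calW(f) = \norm{Lf}_{L^2}^2$, where $L$ denotes the (formal) self-adjoint Fourier multiplier with symbol $\log\abs{\xi}$, so that $\sqrt{\calW(\th)} = \norm{L\th}_{L^2}$.

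Step 1 is to compute $d/dt\,\calW(\th(t,\cdot))$ along~\eqref{eq:conservation_of_mass}. Using $\partial_t\th = -u\cdot\nabla\th$ together with self-adjointness of $L$, one obtains
\begin{equation*}
\frac{d}{dt}\calW(\th) = 2\bracket{L\partial_t\th,\, L\th} = -2\bracket{L(u\cdot\nabla\th),\, L\th}.
\end{equation*}
Decomposing $L(u\cdot\nabla\th) = u\cdot\nabla(L\th) + [L,\,u\cdot\nabla]\th$ and exploiting $\div u = 0$ to kill the transport piece $\bracket{u\cdot\nabla(L\th), L\th} = -\tfrac{1}{2}\int (\div u)(L\th)^2\,dx = 0$ leaves
\begin{equation*}
\frac{d}{dt}\calW(\th) = -2\bracket{[L,\,u\cdot\nabla]\th,\, L\th}.
\end{equation*}

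Step 2 is to apply Cauchy--Schwarz and divide by $2\sqrt{\calW(\th)}$, yielding
\begin{equation*}
\abs{\frac{d}{dt}\sqrt{\calW(\th(t,\cdot))}} \le \norm{[L,\,u\cdot\nabla]\th}_{L^2}.
\end{equation*}
Part (a) therefore reduces to the bilinear commutator estimate
\begin{equation*}
\norm{[L,\,u\cdot\nabla]\th}_{L^2} \le C\,\norm{\nabla u}_{L^q}\norm{\th}_{L^{\tilde{q}}},\qquad \tfrac{1}{q}+\tfrac{1}{\tilde{q}}=\tfrac{1}{2},
\end{equation*}
which I would extract from the Seeger--Smart--Street results~\cite{seeger} applied at the $L^2$ endpoint: the relevant bilinear symbol $\log\abs{\xi}-\log\abs{\eta}$ is homogeneous of degree zero and smooth off the diagonal, exactly the setting of their estimates. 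Combined with the $L^r$-conservation $\norm{\th(t,\cdot)}_{L^{\tilde{q}}} = \norm{\th_0}_{L^{\tilde{q}}}$ along the measure-preserving flow, this proves (a). Integrating (a) from $0$ to $t$ gives $\sqrt{\calW(\th(t,\cdot))} \le \sqrt{\calW(\th_0)} + C\norm{\th_0}_{L^{\tilde{q}}}\int_0^t\norm{\nabla u(s,\cdot)}_{L^q}\,ds$, which is at most linear in $t$ under the hypothesis on $u$, so $\calW(\th(t,\cdot))$ grows at most quadratically; this yields (b).

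The main obstacle is the rigorous treatment of $L$: its symbol $\log\abs{\xi}$ blows up at both $0$ and $\infty$, so $L$ itself is not bounded on any $L^p$. The remedy is to avoid $L$ in isolation and work directly with the bilinear object $[L,\,u\cdot\nabla]$ (equivalently, with its physical-space kernel, obtained as in~\eqref{eq:time_derivative_V} but with an additional logarithmic factor), whose symbol $\log\abs{\xi}-\log\abs{\eta}$ is much tamer and to which~\cite{seeger} applies. The restriction $q\ge 2$ reflects the $L^2$-endpoint nature of this reduction, in contrast to the full range $p>1$ of Theorem~\ref{thm:mainthm}, where an asymmetric $L^\infty\times L^{p'}\times L^p$ trilinear estimate is used instead.
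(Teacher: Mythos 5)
Your argument is correct and is essentially the paper's proof in dual form: your identity $\tfrac{d}{dt}\calW(\th)=-2\bracket{[L,\,u\cdot\nabla]\th,\,L\th}$ is exactly the content of Lemma~\ref{lemma:time_derivative_W} (with $\phi=L\th$), and the bilinear bound $\norm{[L,\,u\cdot\nabla]\th}_{L^2}\le C\norm{\nabla u}_{L^q}\norm{\th}_{L^{\tilde q}}$ you invoke is, by duality, precisely the Seeger--Smart--Street trilinear estimate at the exponent triple $(2,\tilde q,q)$ that the paper applies, after which the Cauchy--Schwarz/division step and the integration for part (b) coincide with the paper's. The one caveat is that identifying $[L,\,u\cdot\nabla]\th$ with the principal-value first order $d$-commutator with kernel $c_d\big(u(x)-u(y)\big)\cdot(x-y)/\abs{x-y}^{d+2}$ --- which is what actually places you in the setting of~\cite{seeger}, rather than the ``degree-zero symbol'' heuristic you sketch --- requires the careful surface-term bookkeeping of Lemma~\ref{lemma:time_derivative_V}; the paper transfers that computation to the $\calW$ setting by polarization rather than redoing it.
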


\begin{remark}
In Theorem~\ref{thm:W}, the best we can do is obtaining a bound in terms of the $L^2$ norm in space $\norm{\nabla u(t,\cdot)}_2$, whereas in Theorem~\ref{thm:mainthm} we can go all the way down to $\norm{\nabla u(t,\cdot)}_p$ for $p>1$. 
\end{remark}

Further discussion of our results and some corollaries are given in Section~\ref{sec:discussion}.

\section{Proofs}

In this section we prove Theorems~\ref{thm:mainthm} and~\ref{thm:W}. We start by expressing $\calV(f)$ in physical space.

\begin{lemma} \label{lemma:V_physical_space}
For any $f$ in the Schwartz class, $\calV(f)$ can be written in physical space
\begin{equation*}
\calV(f) = \alpha_d\left(\frac{1}{2}\iint_{\abs{x-y} \le 1} \frac{\abs{f(x)-f(y)}^2}{\abs{x-y}^d}\,dx\,dy - \iint_{\abs{x-y}>1} \frac{f(x)f(y)}{\abs{x-y}^d}\,dx\,dy\right) + \beta_d \normltwo{f}^2
\end{equation*}
where $\alpha_d$ and $\beta_d$ are two constants and $\alpha_d >0$. 
\end{lemma}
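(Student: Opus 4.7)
The plan is to identify, as a tempered distribution, the inverse Fourier transform $K$ of the multiplier $\log|\xi|$, and then apply Plancherel to rewrite $\calV(f) = \langle K, f \conv \tilde f\rangle$ in physical space. The multiplier $\log|\xi|$ is locally integrable and polynomially bounded, so $K$ is a well-defined tempered distribution. The scaling relation $\log|\lambda\xi| = \log\lambda + \log|\xi|$ tells us that $K$ should be a regularization of a kernel homogeneous of degree $-d$ plus a multiple of $\delta_0$, exactly the structure predicted by the formula.

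Rather than identify $K$ directly, I would derive the identity by differentiating the analytic family $s\mapsto \int_{\Rd} |\xi|^{-2s}|\hat f(\xi)|^2\,d\xi$ at $s=0$. For $0<2s<d$ the Riesz potential identity gives
\[
\int_{\Rd}|\xi|^{-2s}\abss{\hat f(\xi)}^2\,d\xi = c(d,s)\iint_{\Rd\times\Rd}\frac{f(x)f(y)}{|x-y|^{d-2s}}\,dx\,dy,
\]
with $c(d,s)$ an explicit positive normalization constant whose Taylor expansion at $0$ reads $c(d,s)= c_1(d)\,s+c_2(d)\,s^2+O(s^3)$, $c_1(d)>0$. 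The factor $s$ is forced because the left-hand side is finite at $s=0$ while the double integral diverges like $1/s$. Expanding $|\xi|^{-2s}=1-2s\log|\xi|+O(s^2)$, the left-hand side equals (Plancherel normalization) $\cdot\normltwo{f}^2 - 2s\,\calV(f)+O(s^2)$, so it suffices to read off the coefficient of $s^1$ on the right.

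To extract that coefficient I would split the double integral according to $\abs{x-y}\le 1$ versus $\abs{x-y}>1$, and on the small-distance piece use the polarization $f(x)f(y)=\tfrac12\bigl(f(x)^2+f(y)^2\bigr)-\tfrac12|f(x)-f(y)|^2$. On $\{\abs{x-y}>1\}$ Schwartz decay gives dominated convergence and the limit $\iint_{\abs{x-y}>1}f(x)f(y)|x-y|^{-d}\,dx\,dy$. The $|f(x)-f(y)|^2$ piece on $\{\abs{x-y}\le1\}$ is likewise uniformly integrable (since $|f(x)-f(y)|=O(|x-y|)$) and tends to $\tfrac12\iint_{\abs{x-y}\le 1}|f(x)-f(y)|^2|x-y|^{-d}\,dx\,dy$. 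The $f(x)^2+f(y)^2$ piece on $\{\abs{x-y}\le 1\}$ separates variables and produces the Laurent pole
\[
\normltwo{f}^2\int_{|z|\le 1}|z|^{2s-d}\,dz=\frac{\sigma_{d-1}}{2s}\normltwo{f}^2.
\]
Multiplying by $c(d,s)$, the pole is killed by the factor $s$ and contributes a constant plus higher-order terms; reading off the $s^1$-coefficient of the full product and matching with $-2\calV(f)$ yields the stated identity, with $\alpha_d$ proportional to $c_1(d)$ (hence positive) and $\beta_d$ a specific linear combination of $c_1(d)$, $c_2(d)$ and $\sigma_{d-1}$.

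The main technical obstacle is the bookkeeping of Laurent-versus-Taylor coefficients when one multiplies the $O(1/s)$ pole of the double integral by the vanishing factor $c(d,s)$, and justifying the interchange of $s\to 0^+$ with the integral on the other three pieces via dominated convergence. Positivity $\alpha_d>0$ is then immediate from the explicit Gamma-function form of $c(d,s)$. An alternative route would be to identify $K$ directly using the Gelfand--Shilov analytic continuation of $|x|^{-\alpha}$ past the pole at $\alpha=d$; this bypasses the $s$-expansion but requires more distribution-theoretic machinery.
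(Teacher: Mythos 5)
Your proposal is correct, but it takes a genuinely different route from the paper. The paper works directly with the log kernel: it defines the tempered distribution $T$ regularizing $\abs{x}^{-d}$, computes $\hat{T}=\zeta_d-\sigma_{d-1}\log\abs{\xi}$ explicitly in the appendix via spherical coordinates and Bessel functions, and then unwinds $\calV(f)=\bracket{\hat{\hat{S}},f\conv\tilde{f}}$. You instead differentiate the meromorphic family of Riesz potentials at $s=0$: starting from
\begin{equation*}
\int_{\Rd}\abs{\xi}^{-2s}\abss{\hat f(\xi)}^2\,d\xi \;=\; \pi^{2s-d/2}\frac{\Gamma\big((d-2s)/2\big)}{\Gamma(s)}\iint\frac{f(x)f(y)}{\abs{x-y}^{d-2s}}\,dx\,dy,\qquad 0<2s<d,
\end{equation*}
the zero of $1/\Gamma(s)$ at $s=0$ cancels the $\sigma_{d-1}/(2s)$ pole coming from the $\tfrac12\big(f(x)^2+f(y)^2\big)$ piece on $\{\abs{x-y}\le 1\}$, and matching $s^1$-coefficients reproduces the lemma with $\alpha_d=\pi^{-d/2}\Gamma(d/2)/2=1/\sigma_{d-1}>0$, in agreement with the paper's constant. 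All the limit interchanges you flag (continuity of the two convergent pieces at $s=0$, the $O(s^2)$ Taylor remainder of $\abs{\xi}^{-2s}$ against $\abss{\hat f}^2$) are routine dominated convergence for Schwartz $f$, since $\abs{x-y}^{2s-d}\le\abs{x-y}^{-d}$ on $\{\abs{x-y}\le 1\}$ and $\le 1$ on $\{\abs{x-y}>1\}$ for $s\ge 0$. What each approach buys: the paper's computation is self-contained and pins down the constants $\zeta_d,\sigma_{d-1}$ concretely, at the price of the Bessel-function integral; yours avoids any explicit Fourier computation of the log kernel, replaces it with the classical Gamma-function normalization of Riesz potentials, and has the conceptual merit of making rigorous exactly the expansion $\norm{f}_{\dot H^{-s}}^2=\normltwo{f}^2-2s\,\calV(f)+O(s^2)$ that the paper itself invokes heuristically in Section~\ref{sec:functionals} --- the regularization of $\abs{x}^{-d}$ is not posited but emerges from the pole cancellation. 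The one point to be careful about if you write this up fully is the order of the error terms: the pieces $I_1(s)$, $I_2(s)$ need only be continuous at $s=0$ (not differentiable), because they are multiplied by a factor that is already $O(s)$; stating this explicitly keeps the $s^1$-coefficient extraction honest.
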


\begin{proof}
Define the following tempered distribution $T$
\[
\bracket{T,\phi} = \int_{\abs{h}\le 1}\frac{\phi(h)-\phi(0)}{\abs{h}^d}\,dh + \int_{\abs{h}>1}\frac{\phi(h)}{\abs{h}^d}\,dh
\]
Then (see the appendix) the Fourier transform of $T$ is an $L^1_{\text{loc}}$ function and 
\[
\bracket{\hat{T},\psi} = \int\big(\zeta_d-\sigma_{d-1}\log\abs{\xi}\big)\,\psi(\xi)\,d\xi
\]
for any $\psi$ in the Schwartz class $\Schwartz(\Rd)$, where $\sigma_{d-1}$ is the surface area of the unit sphere in $\Rd$ and $\zeta_d$ is a constant. Consequently if we define the tempered distribution $S$ by
\[
S = \zeta_d/\sigma_{d-1}\delta_0 - 1/\sigma_{d-1} T
\]
then for all $\psi\in\Schwartz(\Rd)$ we have
\[
\bracket{\hat{S},\psi}=\int\log\abs{\xi}\psi(\xi)\,d\xi
\]
We deduce that for all $f\in\Schwartz(\Rd)$, writing $\tilde{f}(x)=f(-x)$ we have
\begin{align*}
\calV(f) &= \bracket{\hat{S},\hat{f}\bar{\hat{f}}} = \bracket{\hat{S},\widehat{f\conv\tilde{f}}} = \bracket{\hat{\hat{S}},f\conv\tilde{f}}\\
 &= \zeta_d/\sigma_{d-1}(f\conv\tilde{f})(0) - 1/\sigma_{d-1}\begin{aligned}[t] & \left(\int_{\abs{h}\le 1}\int\frac{\big(f(x-h)-f(x)\big)f(x)}{\abs{h}^d}\,dx\,dh\right.\\
 & \left. + \int_{\abs{h} > 1}\int\frac{f(x-h)f(x)}{\abs{h}^d}\,dx\,dh \right)\end{aligned}\\
 &= \zeta_d/\sigma_{d-1}\normltwo{f}^2 - 1/\sigma_{d-1}\begin{aligned}[t] & \left(-\frac{1}{2}\iint_{\abs{x-y}\le 1}\frac{\abs{f(x)-f(y)}^2}{\abs{x-y}^d}\,dx\,dy\right.\\
 & \left. + \iint_{\abs{x-y}>1}\frac{f(y)f(x)}{\abs{x-y}^d}\,dx\,dy\right)\end{aligned}
\end{align*}
which concludes the proof.
\end{proof}

We now give the expression of the time-derivative of $\calV\big(\th(t,\cdot)\big)$.
\begin{lemma}\label{lemma:time_derivative_V}
For smooth divergence-free velocity fields $u$ decaying fast enough at infinity, the time-derivative of $\calV\big(\th(t,\cdot)\big)$ can be written
\begin{equation*}
\frac{d}{dt}\calV\big(\th(t,\cdot)\big) = c_d\;\text{PV}\iint \th(t,x)\th(t,y)\big(u(t,x)-u(t,y)\big)\cdot\frac{x-y}{\abs{x-y}^{d+2}}\,dx\,dy
\end{equation*}
where $c_d$ is a positive constant. 
\end{lemma}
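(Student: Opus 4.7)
The plan is to start from the physical-space expression of Lemma~\ref{lemma:V_physical_space}, differentiate each piece using $\partial_t\th=-u\cdot\nabla\th$ (valid since $\div u=0$), and then rearrange by integration by parts and $x\leftrightarrow y$ symmetrizations. The $\beta_d\normltwo{\th}^2$ term has zero derivative because transport by a divergence-free field preserves the $L^2$ norm. So only the near-diagonal Gagliardo-type piece $I_1=\tfrac12\iint_{|x-y|\le 1}|\th(x)-\th(y)|^2|x-y|^{-d}\,dx\,dy$ and the long-range piece $I_2=-\iint_{|x-y|>1}\th(x)\th(y)|x-y|^{-d}\,dx\,dy$ contribute.

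For $I_2$ the kernel is bounded on the relevant domain, so I would differentiate under the integral, substitute $\partial_t\th$, and integrate by parts in $x$ using $\nabla_x|x-y|^{-d}=-d(x-y)/|x-y|^{d+2}$ together with $\div u=0$; this yields the volume contribution $2d\iint_{|x-y|>1}\th(x)\th(y)\,u(x)\cdot(x-y)/|x-y|^{d+2}\,dx\,dy$ plus a surface term on $\{|x-y|=1\}$. For $I_1$ the kernel is singular on the diagonal, so I would first truncate to the annulus $\eps\le|x-y|\le 1$, use the $x\leftrightarrow y$ symmetry of the integrand to reduce $\tfrac{d}{dt}I_1^\eps$ to $-2\iint(\th(x)-\th(y))\,u(x)\cdot\nabla\th(x)|x-y|^{-d}\,dx\,dy$, split this into an $A_\eps$-piece built from $u\cdot\nabla(\th^2)$ and a $B_\eps$-piece built from $\th(y)\,u(x)\cdot\nabla\th(x)$, and integrate by parts in $x$ over the annulus. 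This produces volume principal-value integrals together with surface terms on both $\{|x-y|=\eps\}$ and $\{|x-y|=1\}$.

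The core of the argument is then three cancellations. First, the surface terms on $\{|x-y|=\eps\}$ vanish as $\eps\to 0$: parameterising $x=y+\eps\omega$, the leading $\omega$-odd part of the integrand averages to zero on the sphere, and the next-order part reduces, after integration in $y$, to an integral against $\div u$ or to a total derivative, both of which vanish thanks to the divergence-free constraint. Second, the volume PV term involving $u(x)\,\th(x)^2\cdot(x-y)/|x-y|^{d+2}$ on $|x-y|\le 1$ is identically zero pointwise in $x$ because $\text{PV}\int_{|h|\le 1}h/|h|^{d+2}\,dh=0$ by antisymmetry. Third, the surface terms on $\{|x-y|=1\}$ decompose into a $B_\eps$-piece and an $I_2$-piece, $\pm\,2\iint_{|x-y|=1}\th(x)\th(y)\,u(x)\cdot(x-y)\,d\sigma(x)\,dy$, which are equal and opposite and cancel exactly, plus a residual $A_\eps$-piece $-\iint_{|x-y|=1}u(x)\,\th(x)^2\cdot(x-y)\,d\sigma(x)\,dy$ which vanishes by Fubini because the spherical average of $x-y$ over $\{|y-x|=1\}$ at fixed $x$ is zero.

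What survives is a single principal-value volume integral $2d\alpha_d\,\text{PV}\iint\th(x)\th(y)\,u(x)\cdot(x-y)/|x-y|^{d+2}\,dx\,dy$ over $\Rd\times\Rd$; one last $x\leftrightarrow y$ symmetrisation, exploiting the antisymmetry of $(x-y)/|x-y|^{d+2}$, replaces $u(x)$ by $\tfrac12\bigl(u(x)-u(y)\bigr)$ and yields the claimed formula with $c_d=d\alpha_d>0$. The main obstacle is the careful bookkeeping of the boundary contributions at $|x-y|=\eps$ and $|x-y|=1$: one must verify that they either cancel exactly through the interplay between $I_1$ and $I_2$, or vanish in the limit, and this is precisely where $\div u=0$ and the spherical symmetry of the cutoffs are both essential.
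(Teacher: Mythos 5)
Your proposal follows essentially the same route as the paper's proof: differentiate the physical-space expression from Lemma~\ref{lemma:V_physical_space}, truncate the singular near-diagonal piece at $|x-y|=\eps$, integrate by parts, and check that the boundary terms at $|x-y|=\eps$ vanish in the limit (via the divergence-free condition and the spherical symmetry of the cutoff) while those at $|x-y|=1$ cancel exactly against the contribution of the long-range piece; the final $x\leftrightarrow y$ symmetrization and the constant $c_d=d\,\alpha_d$ also match. The only difference is organizational --- you substitute $\partial_t\th=-u\cdot\nabla\th$ and split into the $\nabla(\th^2)$ and $\th(y)\,u\cdot\nabla\th(x)$ pieces before integrating by parts, whereas the paper integrates the full divergence $-\div(u\th)$ by parts first and then splits --- which changes nothing of substance.
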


\begin{proof}
The derivation has to be done a bit carefully otherwise non integrable terms appear. Recall that the expression of $\calV$ in physical space is (Lemma~\ref{lemma:V_physical_space})
\begin{multline*}
\calV\big(\th(t,\cdot)\big) = \alpha_d\left(\frac{1}{2}\iint_{\abs{x-y}\le 1} \frac{\abs{\th(t,x)-\th(t,y)}^2}{\abs{x-y}^d}\,dx\,dy\right.\\ - \left.\iint_{\abs{x-y}>1} \frac{\th(t,x)\th(t,y)}{\abs{x-y}^d}\,dx\,dy\right) + \beta_d\normltwo{\th(t,\cdot)}^2
\end{multline*}
Taking time-derivatives, the $L^2$ norm term on the right-hand side disappears since the flow is incompressible. Dropping the $t$ for readability we get
\begin{align*}
\frac{d}{dt}\calV(\th) &= \begin{aligned}[t]\alpha_d\;\left(\iint_{\abs{x-y}\le 1} \frac{\big(\th(x)-\th(y)\big)\big(\partial_t\th(x)-\partial_t\th(y)\big)}{\abs{x-y}^d}\,dx\,dy\right.\\- \left.\iint_{\abs{x-y}>1}\frac{\partial_t\th(x)\th(y) + \th(x)\partial_t\th(y)}{\abs{x-y}^d}\,dx\,dy\right)\end{aligned}\\
 &= \alpha_d \; \big(A - B\big)
\end{align*}
Note that since $\th$ and $\partial_t\th$ are smooth and decay fast at infinity, there is no issue in differentiating under the integral sign.

Set 
\[
V(h) = \frac{1}{\abs{h}^d}
\]
We first simplify $A$. By symmetry in the integral in $x$ and $y$ we only keep the term with $\partial_t\th(x)$ and write
\begin{equation*}
\frac{A}{2} = \iint_{\abs{x-y}\le 1} V(x-y)\big(\th(x)-\th(y)\big)\big(-\div(u\th)(x)\big)\,dx\,dy
\end{equation*}
Next we want to integrate by part but doing so directly yields a term which is not integrable. Thus we proceed in the following way:
\begin{align*}
\frac{A}{2} &= \lim_{\eps\to 0} \int_y\int_{\eps < \abs{x-y} \le 1} V(x-y)\big(\th(x)-\th(y)\big)\big(-\div(u\th)(x)\big)\,dx\,dy\\
 &= \begin{aligned}[t] &\lim_{\eps\to 0} \left(\int_y\int_{\eps <\abs{x-y} \le 1} \Big(\nabla V(x-y)\big(\th(x)-\th(y)\big) + V(x-y)\nabla\th(x)\Big)\cdot u(x)\th(x)\,dx\,dy\right.\\&- \int_y\int_{x\in S(y,1)} V(x-y)\big(\th(x)-\th(y)\big)\th(x)\,u(x)\cdot\frac{x-y}{\abs{x-y}}\,d\sigma(x)\,dy\\ &\left.- \int_y\int_{x\in S(y,\eps)} V(x-y)\big(\th(x)-\th(y)\big)\th(x)\,u(x)\cdot\frac{y-x}{\abs{y-x}}\,d\sigma(x)\,dy\right)\end{aligned}\\
 &= \lim_{\eps\to 0} \big(A_1(\eps) + A_2 + A_3(\eps)\big)
\end{align*}
where $S(y,1)$ denotes the sphere of center $y$ and of radius $1$, etc. 

We now fix $\eps>0$ and compute each of the terms $A_1(\eps)$, $A_2$, $A_3(\eps)$. The first one
\[
A_1(\eps)=\int_y\int_{\eps <\abs{x-y} \le 1} \Big(\nabla V(x-y)\big(\th(x)-\th(y)\big) + V(x-y)\nabla\th(x)\Big)\cdot u(x)\th(x)\,dx\,dy
\]
can be split into three terms (we swap integrals in $x$ and $y$)
\begin{equation*}
A_1(\eps) = \begin{aligned}[t]&\int_x \th^2(x)\, u(x)\cdot\int_{\eps<\abs{y-x}\le 1} \nabla V(x-y)\,dy\,dx\\
		  &- \iint_{\eps<\abs{x-y}\le 1} \nabla V(x-y)\cdot u(x)\,\th(x)\,\th(y)\,dx\,dy\\
		  &+ \int_x\frac{1}{2}\nabla\big(\th^2\big)(x)\cdot u(x)\int_{\eps<\abs{y-x}\le 1} V(x-y)\,dy\,dx
		  \end{aligned}
\end{equation*}
The first term cancels since the integral in $y$ is zero, and the last term cancels since the integral in $y$ doesn't depend on $x$ and $u$ is divergence-free. Thus we are left with (after symmetrizing in $x$ and $y$ in the second term)
\[
A_1(\eps) = -\frac{1}{2}\iint_{\eps<\abs{x-y}\le 1} \th(x)\th(y)\big(u(x)-u(y)\big)\cdot\nabla V(x-y)\,dx\,dy
\]

We now turn our attention to $A_2$ (which doesn't depend on $\eps$)
\[
A_2 = - \int_y\int_{x\in S(y,1)} V(x-y)\big(\th(x)-\th(y)\big)\th(x)\,u(x)\cdot\frac{x-y}{\abs{x-y}}\,d\sigma(x)\,dy
\]
We split the integral in two and swap the integrals in $x$ and $y$ in the first term
\begin{equation*}
A_2 = \begin{aligned}[t] &- \int_x \th^2(x)\,u(x) \cdot \int_{y\in S(x,1)}V(x-y)\frac{x-y}{\abs{x-y}}\,d\sigma(y)\,dx\\
	  &+ \int_y\int_{x\in S(y,1)} V(x-y)\th(y)\th(x)\,u(x)\cdot\frac{x-y}{\abs{x-y}}\,d\sigma(x)\,dy
	  \end{aligned}
\end{equation*}
The first term cancels since the integral in $y$ is zero. Thus we are left with
\[
A_2 =  \int_y\int_{x\in S(y,1)} V(x-y)\th(y)\th(x)\,u(x)\cdot\frac{x-y}{\abs{x-y}}\,d\sigma(x)\,dy
\]

Now we simplify the third term
\[
A_3(\eps) = \int_y\int_{x\in S(y,\eps)} V(x-y)\big(\th(y)-\th(x)\big)\th(x)\,u(x)\cdot\frac{y-x}{\abs{y-x}}\,d\sigma(x)\,dy
\]
and show that
\[
\lim_{\eps\to 0} A_3(\eps) = 0
\]
We use the following Taylor's formula with integral remainder to estimate the quantity $\th(y)-\th(x)$
\[
\th(y)-\th(x)=\nabla\th(x)\cdot(y-x) + \int_0^1(1-s)D^2\th\big(x+s(y-x)\big)(y-x,y-x)\,ds
\]
and plug it in the expression of $A_3(\eps)$. $A_3(\eps)$ decomposes into two terms
\[
A_3'(\eps) = \int_y\int_{x\in S(y,\eps)} V(x-y)\nabla\th(x)\cdot(y-x)\,\th(x)\,u(x)\cdot\frac{y-x}{\abs{y-x}}\,d\sigma(x)\,dy
\]
and
\[
A_3''(\eps) = \int_y\int_{x\in S(y,\eps)} V(x-y) \int_0^1(1-s)D^2\th\big(x+s(y-x)\big)(y-x,y-x)ds\; \th(x)\,u(x)\cdot\frac{y-x}{\abs{y-x}}\,d\sigma(x)\,dy
\]
Swapping the integrals in $x$ and $y$ and doing a change of variables $h=y-x$ yields
\[
A_3'(\eps) = \int_x\int_{h\in S(0,\eps)} V(h)\nabla\big(\th^2(x)/2\big)\cdot h\; u(x)\cdot\frac{h}{\abs{h}}\,d\sigma(h)\,dx
\]
Rearranging terms, we get
\[
A_3'(\eps) = \int_xu_i(x)\partial_j\big(\th^2/2\big)(x) \int_{h\in S(0,\eps)} V(h) h_j\frac{h_i}{\abs{h}}\,d\sigma(h)\,dx
\]
where the summation over indices $i$ and $j$ is implied. In the integral in $h$, note that $\abs{h}=\eps$ and $V(h)=\eps^{-d}$. Furthermore, by rotationnal symmetry of the sphere $S(0,\eps)$ it is easy to see that
\[
\int_{h\in S(0,\eps)}h_i\,h_j\,d\sigma(h) = \begin{cases}
0\quad\text{if } i\neq j\\
d^{-1}\sigma_{d-1}\eps^{d+1}\quad\text{if }i=j
\end{cases}
\]
Consequently
\begin{align*}
A_3'(\eps) &= d^{-1}\sigma_{d-1}\int u_i(x)\partial_i\big(\th^2/2\big)(x)\,dx\\
 &= 0
\end{align*}
since $u$ is divergence-free.

Finally, we can crudely bound the term $A_3''(\eps)$
\begin{align*}
\abs{A_3''(\eps)} &= \begin{aligned}[t]\left\lvert\int_x\int_{y\in S(x,\eps)} V(x-y) \int_0^1(1-s)D^2\th\big(x+s(y-x)\big)(y-x,y-x)\,ds\right.\\
\left.\th(x)\,u(x)\cdot\frac{y-x}{\abs{y-x}}\,d\sigma(y)\,dx\right\rvert\end{aligned}\\
 &\le \int_x\int_{y\in S(x,\eps)} V(x-y) \normlinf{D^2\th}\abs{y-x}^2 \abs{\th(x)}\abs{u(x)}\,d\sigma(y)\,dx\\
 &\le \int_x \sigma_{d-1}\eps^{d-1} \eps^{-d} \normlinf{D^2\th}\eps^2 \abs{\th(x)}\abs{u(x)}\,dx\\
 &\le \eps\,\sigma_{d-1}\normlone{u}\normlinf{\th}\normlinf{D^2\th}
\end{align*}
This shows that $A_3''(\eps)\to 0$ as $\eps\to 0$ and thus
\[
A_3(\eps)=A_3'(\eps)+A_3''(\eps) \to 0\quad\text{as }\eps\to 0
\]

We now deal with the $B$ term. By symmetry in $x$ and $y$ we have
\[
\frac{B}{2} = \iint_{\abs{x-y}>1} V(x-y)\partial_t\th(x)\th(y)\,dx\,dy
\]
Here we can substitute $\partial_t\th$ by $-\div(u\th)$ and directly do an integration by part
\begin{align*}
\frac{B}{2} &= \int_y\th(y)\int_{\abs{x-y}>1} V(x-y) \big(-\div(u\th)\big)(x)\,dx\,dy\\
 &= \begin{aligned}[t] & \iint_{\abs{x-y}>1} \th(y)\nabla V(x-y)\cdot u(x)\th(x)\,dx\,dy\\
    & - \int_y\th(y)\int_{x\in S(y,1)} V(x-y)\th(x)u(x)\cdot\frac{y-x}{\abs{y-x}}\,d\sigma(x)\,dy
	\end{aligned}\\
 &= B_1 + B_2
\end{align*}

Note that $B_2=A_2$. Moreover, after symmetrizing in $x$ and $y$, $B_1$ can be written
\[
B_1 =\frac{1}{2} \iint_{\abs{x-y}>1} \th(x)\th(y)\big(u(x)-u(y)\big)\cdot\nabla V(x-y)\,dx\,dy  
\]

In conclusion, grouping everything together we get
\begin{align*}
\frac{A-B}{2} &= \lim_{\eps\to 0}\big(A_1(\eps) + A_2 + A_3(\eps)\big) - (B_1 + B_2)\\
 &= \lim_{\eps\to 0} A_1(\eps) - B_1\\
 &= \lim_{\eps\to 0} -\frac{1}{2} \iint_{\eps<\abs{x-y}\le 1} \th(x)\th(y)\big(u(x)-u(y)\big)\cdot\nabla V(x-y)\,dx\,dy \\
 &- \frac{1}{2} \iint_{\abs{x-y}> 1} \th(x)\th(y)\big(u(x)-u(y)\big)\cdot\nabla V(x-y)\,dx\,dy
\end{align*}

Hence
\[
A-B = \lim_{\eps\to 0} -\iint_{\abs{x-y}>\eps} \th(x)\th(y)\big(u(x)-u(y)\big)\cdot\nabla V(x-y)\,dx\,dy 
\]
which concludes the proof since $\nabla V(h) = -d \; h/\abs{h}^{d+2}$.
\end{proof}

We are now able to prove Theorem~\ref{thm:mainthm}.

\begin{proof}[Proof of Theorem~\ref{thm:mainthm}]
With some notation we can write the time derivative
\begin{equation*}
\frac{d}{dt}\calV\big(\th(t,\cdot)\big) = c_d\;\text{PV}\iint \th(t,x)\th(t,y)\Big(u(t,x)-u(t,y)\Big)\cdot\frac{x-y}{\abs{x-y}^{d+2}}\,dx\,dy
\end{equation*}
as the type of multilinear singular integral studied in~\cite{seeger}. Define
\begin{equation*}
K(h) = c_d\;\frac{h\otimes h-1/d\abs{h}^2 I}{\abs{h}^{d+2}}
\end{equation*}
for $h\in\Rd\setminus\{0\}$, where $h\otimes h$ is the $d\times d$ matrix defined by 
\[
(h\otimes h)_{i,j} = h_ih_j
\]
and $I$ is the $d\times d$ identity matrix. 

It is easy to see that $K$ is a matrix-valued Calderón-Zygmund kernel (i.e. each entry $K_{i,j}$ is a Calderón-Zygmund kernel). Let $m_{x,y}L$ denote the average of a function $L$ between $x$ and $y$ 
\[
m_{x,y}L = \int_0^1 L\big((1-s)x+s y\big)\,ds
\]
and let $M:N$ denote the contraction $M:N=\sum_{i,j}M_{i,j}N_{i,j}$.

Thanks to the divergence-free condition  $\nabla u : I = 0$, where $\big(\nabla u\big)_{i,j} = \partial_iu_j$, we can write (summing over repeated indices)
\begin{align*}
\big(u_j(t,x)-u_j(t,y)\big)\frac{x_j-y_j}{\abs{x-y}^{d+2}} &= \int_0^1 \nabla u_j\big(t,(1-s)x+sy\big)\cdot(x-y)\,ds\frac{x_j-y_j}{\abs{x-y}^{d+2}}\\
 &= m_{x,y}\big(\nabla u(t,\cdot)\big) : \frac{(x-y)\otimes(x-y)-1/d\abs{x-y}^2 I}{\abs{x-y}^{d+2}}
\end{align*}
and thus
\begin{equation*}
\frac{d}{dt}\calV\big(\th(t,\cdot)\big) = \iint \th(t,x)\th(t,y)\,m_{x,y}\big(\nabla u(t,\cdot)\big):K(x-y)\,dx\,dy
\end{equation*}

Using the terminology from~\cite{seeger}, this is a first order $d$-commutator. The main result of~\cite{seeger} is that Hölder estimates are valid on this types of trilinear form. More precisely we have the bound
\[
 \iint \th(t,x)\th(t,y)\,m_{x,y}\big(\nabla u(t,\cdot)\big):K(x-y)\,dx\,dy \le C(d,p) \normlinf{\th(t,\cdot)}\,\,\norm{\th(t,\cdot)}_{L^{p'}}\,\normlp{\nabla u(t,\cdot)}
\]
for any $p>1$.

The $L^{p'}$ and $L^{\infty}$ norms of $\th(t,\cdot)$ are conserved quantities since the flow is incompressible, and this concludes the proof of Theorem~\ref{thm:mainthm}.
\end{proof}

To prove Theorem~\ref{thm:W} we start with the following lemma

\begin{lemma}[Time-derivative of $\calW\big(\th(t,\cdot)\big)$] \label{lemma:time_derivative_W}
For smooth divergence-free velocity fields $u$ decaying fast enough at infinity, the time-derivative of $\calW\big(\th(t,\cdot)\big)$ can be written
\begin{equation*}
\frac{d}{dt}\calW\big(\th(t,\cdot)\big) = c_d\;\text{PV}\iint \phi(t,x)\th(t,y)\big(u(t,x)-u(t,y)\big)\cdot\frac{x-y}{\abs{x-y}^{d+2}}\,dx\,dy
\end{equation*}
where we denote
\[
\hat{\phi}(t,\xi) = \log(\abs{\xi})\,\hat{\th}(t,\xi)
\]
and $c_d$ is the same constant as in Lemma~\ref{lemma:time_derivative_V}.
\end{lemma}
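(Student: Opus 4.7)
The plan is to combine a Parseval-type reduction of $\calW(\theta)$ to an $L^2$ norm with a commutator calculation, rather than redoing the entire integration-by-parts bookkeeping of Lemma~\ref{lemma:time_derivative_V}. By definition of $\phi$ and Parseval,
\[
\calW(\theta) = \int_{\Rd}(\log\abs{\xi})^2\,\abs{\hat\theta}^2\,d\xi = \int_{\Rd}\abs{\hat\phi}^2\,d\xi = \normltwo{\phi}^2,
\]
so $\frac{d}{dt}\calW(\theta) = 2\int \phi\,\partial_t\phi\,dx$. Write $\phi = M\theta$, where $M$ is the (self-adjoint) Fourier multiplier with symbol $\log\abs{\xi}$, i.e.\ convolution with the kernel $S$ appearing in the proof of Lemma~\ref{lemma:V_physical_space}. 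Using $\div u = 0$ to rewrite $\partial_t\theta = -u\cdot\nabla\theta$, the plan is to split
\[
\partial_t\phi = -M(u\cdot\nabla\theta) = -u\cdot\nabla\phi - [M,u\cdot\nabla]\theta.
\]
Upon substitution the transport contribution drops out, since $\int\phi\,u\cdot\nabla\phi\,dx = \tfrac{1}{2}\int u\cdot\nabla(\phi^2)\,dx = 0$ by the divergence-free condition, leaving
\[
\frac{d}{dt}\calW(\theta) = -2\int \phi(x)\,[M,u\cdot\nabla]\theta(x)\,dx.
\]

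The main work is then to identify the commutator as a singular integral. Writing $Mf = S\conv f$ and integrating by parts in $z$ (using $\div u = 0$ once more) one obtains
\[
[M,u\cdot\nabla]\theta(x) = \int \nabla S(x-z)\cdot\bigl(u(z)-u(x)\bigr)\theta(z)\,dz.
\]
For $h\neq 0$ one has classically $\nabla S(h) = (d/\sigma_{d-1})\,h/\abs{h}^{d+2}$; as a distribution, $\nabla S$ also carries contributions at the origin coming from the $\zeta_d\,\delta_0$ term of $S$ and from the subtraction in the definition of $T$. The key observation is that the vector field $G(h) := \bigl(u(x-h)-u(x)\bigr)\,\theta(x-h)$, against which $\nabla S$ is paired, satisfies $G(0) = 0$ and, by $\div u = 0$, also $\div_h G(0) = -\div u(x)\,\theta(x) = 0$. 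This makes both the $\nabla\delta_0$ contribution and the inner boundary term from integration by parts against $T$ vanish, so only the classical PV kernel survives and
\[
[M,u\cdot\nabla]\theta(x) = c_d\,\text{PV}\int \frac{x-z}{\abs{x-z}^{d+2}}\cdot\bigl(u(z)-u(x)\bigr)\,\theta(z)\,dz,
\]
with $c_d$ as in Lemma~\ref{lemma:time_derivative_V}. Inserting this into the expression for $\frac{d}{dt}\calW$ and flipping the sign produces the claimed identity.

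The hardest point is the distributional bookkeeping at $h = 0$: one must check rigorously that the $\div u = 0$ condition is precisely what kills the $\nabla\delta_0$ contribution and the $\eps\to 0$ boundary term coming from the regularized $T$-integral, so that $\nabla S$ effectively acts as the classical principal-value kernel against $G$. A more elementary but messier route, which mirrors Lemma~\ref{lemma:time_derivative_V} more directly, is to set $B(f,g) := \int\log\abs{\xi}\,\hat f\,\overline{\hat g}\,d\xi$ (the polarization of $\calV$), note that $\calW(\theta) = B(\phi,\theta)$ and that the self-adjointness $B(Mf,g) = B(f,Mg)$ yields $\frac{d}{dt}\calW(\theta) = 2B(\phi,\partial_t\theta)$, and then repeat the regularized integration by parts of Lemma~\ref{lemma:time_derivative_V} with $\phi$ in place of one of the $\theta$'s; the difficulty there migrates to showing that the logarithmically divergent piece $\sigma_{d-1}\log(1/\eps)\int\nabla\phi\cdot u\,\theta\,dx$---which in Lemma~\ref{lemma:time_derivative_V} vanished because $\theta\,\nabla\theta = \tfrac{1}{2}\nabla(\theta^2)$ is a gradient but $\theta\,\nabla\phi$ is not---cancels against the matching divergence coming from the inner spherical boundary at $\abs{x-y}=\eps$ as $\eps\to 0$.
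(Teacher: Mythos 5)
Your argument is correct in substance but takes a genuinely different route from the paper. The paper never touches the singular kernel a second time: it takes Lemma~\ref{lemma:time_derivative_V} as a black box, rewrites both sides of that identity in Fourier variables, polarizes the resulting quadratic identity in $f$ into a bilinear one, and then recognizes $\frac{d}{dt}\calW\big(\th(t,\cdot)\big)$, computed in Fourier space, as that bilinear form evaluated at the pair $(\phi,\th)$. You instead stay in physical space: writing $\calW(\th)=\normltwo{M\th}^2$ with $M$ the $\log\abs{\xi}$ multiplier, discarding the transport term by incompressibility, and identifying $[M,u\cdot\nabla]\th$ as a principal-value integral after checking that the distributional contributions of $\nabla S$ at the origin (the $\nabla\delta_0$ piece and the $\eps$-sphere boundary term) are annihilated because your vector field $G$ satisfies $\div_h G(0)=0$. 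That cancellation analysis is right --- the sphere average of $h_ih_j$ reduces the inner boundary term to a multiple of $\div_h G(0)$, exactly as in the treatment of $A_3'(\eps)$ in the proof of Lemma~\ref{lemma:time_derivative_V} --- and your route has the advantage of exhibiting the commutator structure (the ``first order $d$-commutator'' of~\cite{seeger}) explicitly from the outset, at the cost of redoing, in distributional form, the delicate bookkeeping that the paper's polarization trick avoids entirely. One discrepancy you should track rather than wave at in your last sentence: from $\frac{d}{dt}\normltwo{\phi}^2=2\int\phi\,\partial_t\phi$ your computation actually produces the constant $2c_d$, not $c_d$. This is not a defect of your approach: the paper's polarization step tacitly treats the Fourier-side bilinear form $(f,g)\mapsto 2\Re\iint\log\abs{\xi}\,(-i)\xi\cdot\hat{v}(\xi+\eta)\hat{f}(-\xi)\hat{g}(-\eta)\,d\xi\,d\eta$ as symmetric, which it is not (its antisymmetric part is precisely the commutator term), and the correctly symmetrized identity also yields $2c_d$, because the term with the roles of $\phi$ and $\th$ exchanged equals $2\int\nabla\phi\cdot u\,\phi\,dx=0$ by incompressibility. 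Since only the size of the constant matters for Theorem~\ref{thm:W}, this is immaterial downstream, but you should either carry the factor $2$ honestly or explain why it is irrelevant.
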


\begin{proof}
Let $f=f(x)$ be any functions (regular enough, e.g. in the Schwartz class) and $u=u(t,x)$ be any divergence-free velocity field (also regular enough). Let $\th$ be the solution of
\begin{align*}
\begin{cases}
\partial_t\th+\div(u\th) = 0 &\text{on } (-1,1)\times\Rd\\
\th(0,\cdot) = f & \text{on } \Rd
\end{cases}
\end{align*}
and denote
\[
v(x) = u(0,x)
\]
Then Lemma~\ref{lemma:time_derivative_V} shows that
\begin{equation} \label{eq:trilinear_form}
\left.\frac{d}{dt}\right\rvert_{t=0} \calV\big(\th(t,\cdot)\big) =  c_d\;\text{PV}\iint f(x)f(y)\big(v(x)-v(y)\big)\cdot\frac{x-y}{\abs{x-y}^{d+2}}\,dx\,dy
\end{equation}
On the other hand, computing this time-derivative in Fourier space gives us
\begin{align*}
\frac{d}{dt}\calV\big(\th(t,\cdot)\big) &= \frac{d}{dt}\int \log\abs{\xi}\,\abss{\hat{\th}(t,\xi)}^2\,d\xi\\
 &= 2\Re\int\log\abs{\xi} \partial_t\hat{\th}(t,\xi)\overline{\hat{\th}(t,\xi)}\,d\xi\\
 &= 2\Re\int\log\abs{\xi} (-i)\xi\cdot(\hat{u}\conv\hat{\th}(t,\cdot))\hat{\th}(t,-\xi)\,d\xi\\
 &= 2\Re\iint\log\abs{\xi} (-i)\xi\cdot\hat{u}(t,\xi+\eta)\hat{\th}(t,-\eta)\hat{\th}(t,-\xi)\,d\xi\,d\eta
\end{align*}
where $\Re$ denotes the real part. Writing this last expression at time $t=0$ and using~\eqref{eq:trilinear_form}, we get an equivalence formula in physical and Fourier space:
\[
c_d\;\text{PV}\iint f(x)f(y)\big(v(x)-v(y)\big)\cdot\frac{x-y}{\abs{x-y}^{d+2}}\,dx\,dy =  2\Re\iint\log\abs{\xi} (-i)\xi\cdot\hat{v}(\xi+\eta)\hat{f}(-\xi)\hat{f}(-\eta)\,d\xi\,d\eta
\]
true for any $f$ and divergence-free $v$; thus if we polarize the following equality is true
\begin{multline} \label{eq:identity_physical_Fourier}
c_d\;\text{PV}\iint f(x)\,g(y)\,\big(v(x)-v(y)\big)\cdot\frac{x-y}{\abs{x-y}^{d+2}}\,dx\,dy =\\  2\Re\iint\log\abs{\xi} (-i)\,\xi\cdot\hat{v}(\xi+\eta)\,\hat{f}(-\xi)\,\hat{g}(-\eta)\,d\xi\,d\eta
\end{multline}
for any (smooth, fast-decaying) $f$, $g$ and divergence-free $v$.

Furthermore, in Fourier space  the time-derivative of $\calW\big(\th(t,\cdot)\big)$ is (computations are similar to the previous ones)
\begin{align*}
\frac{d}{dt}\calW\big(\th(t,\cdot)\big) &= 2\Re\iint\big(\log\abs{\xi}\big)^2 (-i)\xi\cdot\hat{u}(t,\xi+\eta)\hat{\th}(t,-\eta)\hat{\th}(t,-\xi)\,d\xi\,d\eta\\
 &=  2\Re\iint\log\abs{\xi} (-i)\xi\cdot\hat{u}(t,\xi+\eta)\hat{\th}(t,-\eta)\hat{\phi}(t,-\xi)\,d\xi\,d\eta
\end{align*}
where
\[
\hat{\phi}(t,\xi) = \log(\abs{\xi})\,\hat{\th}(t,\xi)
\]
Using equality~\eqref{eq:identity_physical_Fourier} then yields the desired result.
\end{proof}

\begin{proof}[Proof of Theorem~\ref{thm:W}]
Lemma~\ref{lemma:time_derivative_W} just showed that
\begin{equation*}
\frac{d}{dt}\calW\big(\th(t,\cdot)\big) = c_d\;\text{PV}\iint \phi(t,x)\th(t,y)\Big(u(t,x)-u(t,y)\Big)\cdot\frac{x-y}{\abs{x-y}^{d+2}}\,dx\,dy
\end{equation*}
where $\hat{\phi}(t,\xi) = \log(\abs{\xi})\,\hat{\th}(t,\xi)$. This is the same first order $d$-commutator considered previously (see the proof of Theorem~\ref{thm:mainthm}), but this time in $\phi(t,\cdot)$, $\th(t,\cdot)$ and $\nabla u(t,\cdot)$. The results in~\cite{seeger} imply
\begin{multline*}
 \iint \phi(t,x)\th(t,y)\Big(u(t,x)-u(t,y)\Big)\cdot\frac{x-y}{\abs{x-y}^{d+2}}\,dx\,dy\le \\
 C(d) \normltwo{\phi(t,\cdot)}\,\,\norm{\th(t,\cdot)}_{L^{\tilde{q}}}\,\norm{\nabla u(t,\cdot)}_{L^q}
\end{multline*}
for any $2\le q\le \infty$, where $2\le \tilde{q}\le \infty$ is such that $1/2+1/q+1/\tilde{q} = 1$. Note that the constant $C(d)$ doesn't depend on $q$. It is immediate to see in Fourier space that
\[
\normltwo{\phi(t,\cdot)} = \sqrt{\calW\big(\th(t,\cdot)\big)}
\]
which is enough to conclude to proof.
\end{proof}

\section{Discussion and corollaries} \label{sec:discussion}

\subsection{The functionals $\calV$ and $\calW$} \label{sec:functionals}
Let us give some background on the functionals $\calV$ and $\calW$ introduced in this work. A regularized version of $\calV$ was considered in~\cite{jabin}, on a problem that shares similarities with the one considered in this paper. In~\cite{crippadelellis}, the idea of controlling the $\log$ of a derivative of $\th(t,\cdot)$ is also present. Additionally, the usual homogeneous Sobolev seminorm $\norm{f}_{\dot{H}^s}$ can be defined in Fourier space by
\[
\norm{f}_{\dot{H}^s}^2=\int_{\Rd}\abs{\xi}^{2s}\,\abss{\hat{f}(\xi)}^2\,d\xi
\]
This can be written in physical space for $0<s<1$
\[
\norm{f}_{\dot{H}^s}^2=c_{d,s} \iint_{\Rd\times\Rd} \frac{\abs{f(x)-f(y)}^2}{\abs{x-y}^{d+2s}}\,dx\,dy
\]
where the constant $c_{d,s}$ blows up as $s\to 0$ (see for instance~\cite{mazyashaposhnikova} for more information). The functionals $\calV$ and $\calW$ involve taking a limit $s\to 0$. More precisely, note that formally for small $s$
\[
\abs{\xi}^{2s} = 1 + 2s\log\abs{\xi} + 2s^2(\log{\abs{\xi}})^2+O(s^3)
\]
Thus $\calV(f)$ and $\calW(f)$ are the first terms of the expansion of $\norm{f}_{\dot{H}^s}^2$ as $s\to 0$
\[
\norm{f}_{\dot{H}^s}^2 = \normltwo{f}^2 + 2s\,\calV(f) + 2s^2\,\calW(f) + O(s^3)
\]

See Section~\ref{sect:blowup_sobolev} for additional insight.

\subsection{Brief comments on the harmonic analysis estimate}
For our theorems~\ref{thm:mainthm} and~\ref{thm:W} we rely crucially on a hard harmonic analysis estimate; more precisely we need Hölder-type bounds on a bilinear singular integral. Without going in too much depth, let us point out that usual Calderón-Zygmund theory does not apply here as the kernel is too singular (if $d\ge 2$) and even the multilinear singular integral framework developed by Grafakos and Torres~\cite{grafakostorres} is not adapted for this problem. 

In~\cite{christjourne} Christ and Journé studied the type of multilinear singular integral operators we consider in this paper, however applying their results would only give us a bound on $d/dt\,\calV\big(\th(t,\cdot)\big)$ in terms of the $L^{\infty}$ norm of $\nabla u(t,\cdot)$. This is not useful here as controlling $\norm{\nabla u(t,\cdot)}_{\infty}$ makes everything obvious; thus only the very recent work~\cite{seeger} contains the needed estimates. We refer to~\cite{seeger} for more information on these types of multilinear singular integral operators.

\subsection{Two ways to quantify mixing} \label{sec:mixing_def}
Two measures of mixing have been mainly considered in the literature. We follow here the naming of~\cite{acm}.

\begin{definition}
The functional mixing scale of $\th(t,\cdot)$ is $\normhmo{\th(t,\cdot)}$.
\end{definition}

The second notion has traditionally been considered for functions with values $\pm 1$, but it can be extended naturally to functions in $L^{\infty}$

\begin{definition} \label{def:geometric_mixing_scale}
Given $0<\kappa<1$, the geometric mixing scale of $\th(t,\cdot)$ is the infimum $\eps(t)$ of all $\eps>0$ such that
\begin{equation*}
\frac{\normlinf{\th(t,\cdot)\conv \chi_{\eps}}}{\normlinf{\th(t,\cdot)}} \le 1-\kappa
\end{equation*}
where $\conv$ denotes the convolution, $\chi$ is the indicator function of the unit ball in $\Rd$ modified to have total mass $1$: $\chi(x)=\frac{1}{\abs{B_1(0)}}\ind_{B_1(0)}(x)$ and
\[
\chi_{\eps}(x) = \eps^{-d}\chi(x/\eps)
\]
\end{definition}

\subsection{Relations between our results and previous ones} \label{sec:recover_results}
A consequence of the main result in~\cite{crippadelellis} is an exponential lower bound on the geometric mixing scale 
\[
\eps(t) \ge C^{-1}\exp\left(-C\int_0^t\lVert\nabla u(t',\cdot)\,dt'\rVert_{L^p}\right)
\]
for $p>1$, while the works~\cite{ikx},\cite{seis} showed an exponential lower bound on the functional mixing scale 
\[
\normhmo{\th(t,\cdot)} \ge C^{-1}\exp\left(-C\int_0^t\lVert\nabla u(t',\cdot)\,dt'\rVert_{L^p}\right)
\]

We recover both results. Indeed upper bounds on $\calV(f)$ imply lower bounds on the functional and geometric mixing scales. More precisely, for the functional mixing scale we have the following proposition

\begin{prop}[Exponential decay of the functional mixing scale] \label{prop:exp_decay_functional}
For all $s>0$ we have the following convexity inequality
\begin{enumerate}[label={\alph*)}]
\item For all non zero $f$ in the Schwarz class $\Schwartz$
\begin{equation*}
\normhms{f}/\normltwo{f} \ge \exp\big(-s\,\,\calV(f)/\normltwo{f}^2\big)
\end{equation*}
\end{enumerate}
As a consequence:
\begin{enumerate}[label={\alph*)},resume]
\item Fix any $s>0$. There is a constant $C$ depending only on $p$ and the dimension $d$ such that
\begin{equation*}
\normhms{\th(t,\cdot)} \ge \normltwo{\th_0}\!\exp\big(\!-s\calV(\th_0)/\!\normltwo{\th_0}\!\big)\,\exp\left( - C\frac{\normlinf{\th_0}\norm{\th_0}_{L^{p'}}}{\normltwo{\th_0}^2}\int_0^t\normlp{\nabla u(t',\cdot)}\,dt'\right)
\end{equation*}
where $p'=p/(p-1)$, for any smooth,fast-decaying solution $\th$ of~\eqref{eq:conservation_of_mass}.
\end{enumerate}
\end{prop}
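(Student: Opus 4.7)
The proof breaks naturally into the two announced parts, and the heart of the matter is part (a); once that convexity inequality is established, part (b) is a direct combination with Theorem~\ref{thm:mainthm}(c) together with conservation of $L^p$ norms under transport by a divergence-free field.

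For part (a), the plan is to rewrite everything in Fourier space and recognize $\calV$, $\normltwo{f}^2$, and $\normhms{f}^2$ as integrals against a common probability measure. Specifically, since $f$ is nonzero I will set
\[
d\mu(\xi) = \frac{\abss{\hat{f}(\xi)}^2}{\normltwo{f}^2}\,d\xi,
\]
which is a probability measure on $\Rd$ by Plancherel. Then
\[
\frac{\normhms{f}^2}{\normltwo{f}^2} = \int_{\Rd} e^{-2s\log\abs{\xi}}\,d\mu(\xi), \qquad \frac{\calV(f)}{\normltwo{f}^2} = \int_{\Rd} \log\abs{\xi}\,d\mu(\xi).
\]
Since $x\mapsto e^{-2sx}$ is convex for $s>0$, Jensen's inequality applied to the random variable $\log\abs{\xi}$ under $\mu$ gives
\[
\int_{\Rd} e^{-2s\log\abs{\xi}}\,d\mu(\xi) \ge \exp\!\left(-2s\int_{\Rd}\log\abs{\xi}\,d\mu(\xi)\right) = \exp\!\left(-2s\,\calV(f)/\normltwo{f}^2\right).
\]
Taking square roots yields the claimed inequality. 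The only point to check carefully is that $\log\abs{\xi}$ is integrable against $\mu$ when $f\in\Schwartz$, which holds because $\abss{\hat{f}(\xi)}^2$ is Schwartz and $\log\abs{\xi}$ is locally integrable and grows only logarithmically at infinity.

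For part (b), I will apply part (a) to $\th(t,\cdot)$ and then insert the bound from Theorem~\ref{thm:mainthm}(c). The incompressibility of $u$ implies that all $L^r$ norms of $\th(t,\cdot)$ are preserved in time, in particular $\normltwo{\th(t,\cdot)} = \normltwo{\th_0}$, $\normlinf{\th(t,\cdot)} = \normlinf{\th_0}$, and $\norm{\th(t,\cdot)}_{p'} = \norm{\th_0}_{p'}$. Combining part (a) with Theorem~\ref{thm:mainthm}(c) then gives
\[
\normhms{\th(t,\cdot)} \ge \normltwo{\th_0}\exp\!\left(-s\,\calV(\th_0)/\normltwo{\th_0}^2\right)\exp\!\left(-sC\,\frac{\normlinf{\th_0}\norm{\th_0}_{p'}}{\normltwo{\th_0}^2}\int_0^t \normlp{\nabla u(t',\cdot)}\,dt'\right),
\]
and absorbing the factor $s$ into the constant yields the stated bound.

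The main obstacle is conceptual rather than technical: recognizing that the bound claimed in (a) is just Jensen's inequality in disguise, with the probability measure built out of the Plancherel density $\abss{\hat{f}}^2/\normltwo{f}^2$. Once that reformulation is made, the rest of the argument is an essentially routine application of results already proved in the paper.
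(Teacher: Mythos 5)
Your proof is correct and follows essentially the same route as the paper: both build the probability measure $d\mu=\abss{\hat f}^2/\normltwo{f}^2\,d\xi$ and apply Jensen's inequality with the convex exponential to compare $\int\log\abs{\xi}\,d\mu$ with $\int\abs{\xi}^{-2s}\,d\mu$, then combine with Theorem~\ref{thm:mainthm} and conservation of the $L^r$ norms for part (b). Your added remark on the integrability of $\log\abs{\xi}$ against $\mu$ is a harmless extra check not present in the paper.
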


\begin{remark}
The decay rate we obtain this way is
\[
C(d,p) \, \frac{\normlinf{\th_0}\norm{\th_0}_{L^{p'}}}{\normltwo{\th_0}^2}
\]
For $p=2$ for instance we get decay rate 
\[
C(d,p) \frac{\normlinf{\th_0}}{\normltwo{\th_0}}
\]
which is a slight improvement from~\cite{ikx}, where the authors find the following decay rate (in the $L^2$ case $p=2$)
\[
\frac{c(d,p)}{\abs{A_{\lambda}}^{1/2}}
\]
where $A_{\lambda}$ is the set $\left\{x\mid\th_0(x)/\normlinf{\th_0} > \lambda\right\}$ and $\abs{A_{\lambda}}$ denotes its Lebesgue measure. The parameter $\lambda$ is a fixed number in $(0,1)$. Note that a weak $L^2$ estimate yields 
\[
\abs{A_{\lambda}}^{1/2} \le \lambda^{-1} \frac{\normltwo{\th_0}}{\normlinf{\th_0}}
\]
so that
\[
\frac{c(d,p)}{\abs{A_{\lambda}}^{1/2}} \ge c(d,p) \lambda \frac{\normlinf{\th_0}}{\normltwo{\th_0}}
\]

In any case, both decay rates are larger when the support of $\th_0$ is smaller, which matches the intuition, see the related discussion in~\cite[Sect.~1]{ikx}.
\end{remark}

For the geometric mixing scale we have the following proposition
\begin{prop}[Exponential decay of the geometric mixing scale] \label{prop:exp_decay_geometric}
~
\begin{enumerate}[label={\alph*)}]
\item There exists a constant $A>0$ depending on the dimension $d$ and on $\kappa$ (see def.~\ref{def:geometric_mixing_scale}) such that for any $f\in\Schwartz$ with $\calV(f)>0$
\begin{equation*}
\eps<A^{-1}\,\exp\big(-A\,\calV(f)/\normltwo{f}^2\big) \imply \frac{\normlinf{f\conv\chieps}}{\normlinf{f}} > (1-\kappa) \frac{\normltwo{f}^2}{\normlone{f}\normlinf{f}}
\end{equation*}
\end{enumerate}
As a consequence:
\begin{enumerate}[label={\alph*)},resume]
\item Consider the system~\eqref{eq:conservation_of_mass} on the flat torus $\Tor^d$. Assume that $\th_0$ only has values $\pm 1$. Then there exist constants $A(d,\kappa)$ and $C(d,p)$ such that the geometric mixing scale $\eps(t)$ satisfies 
\begin{equation*}
\eps(t)\ge A^{-1} \exp(-A\calV(\th_0))\exp\left(-A\,C\int_0^t\normlp{\nabla u(t',\cdot)}\,dt'\right)
\end{equation*}
\end{enumerate}
\end{prop}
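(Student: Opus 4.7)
My plan for part (a) is to convert the desired $L^{\infty}$ lower bound into an $L^2$ one and then exploit $\calV(f)$ to localize $\hat{f}$ in frequency. First, since $\normlone{\chieps}=1$ we have $\normlone{f\conv\chieps}\le\normlone{f}$, and combined with the elementary $\normltwo{g}^2\le\normlinf{g}\normlone{g}$ this gives
\[
\normlinf{f\conv\chieps}\ge\frac{\normltwo{f\conv\chieps}^2}{\normlone{f}}.
\]
Thus it suffices to show $\normltwo{f\conv\chieps}^2>(1-\kappa)\normltwo{f}^2$ whenever $\eps$ satisfies the hypothesis.

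For this I would use Plancherel. Since $\chi$ is a probability density, $\abs{\hat{\chi}}\le 1$ with $\hat{\chi}(0)=1$, so by continuity there exists $r_{\kappa}>0$ (depending on $d$ and $\kappa$) with $\abs{\hat{\chi}(\eta)}^2\ge 1-\kappa/2$ for $\abs{\eta}\le r_{\kappa}$. Setting $R:=r_{\kappa}/\eps$ and splitting the Plancherel integral,
\[
\normltwo{f\conv\chieps}^2\ge(1-\kappa/2)\Bigl(\normltwo{f}^2-\int_{\abs{\xi}>R}\abss{\hat{f}(\xi)}^2\,d\xi\Bigr),
\]
so the task reduces to a tail estimate $\int_{\abs{\xi}>R}\abss{\hat{f}}^2\le c_{\kappa}\normltwo{f}^2$ with $c_{\kappa}=\kappa/(2-\kappa)$. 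Using $\log\abs{\xi}\ge\log R$ on $\{\abs{\xi}>R\}$ and rearranging the definition of $\calV$,
\[
\int_{\abs{\xi}>R}\abss{\hat{f}}^2\,d\xi\le\frac{1}{\log R}\Bigl(\calV(f)+\int_{\abs{\xi}<1}\abs{\log\abs{\xi}}\,\abss{\hat{f}}^2\,d\xi\Bigr),
\]
and the low-frequency defect is controlled by $\normlinf{\hat{f}}^2\int_{\abs{\xi}<1}\abs{\log\abs{\xi}}\,d\xi\lesssim_d\normlone{f}^2$. Choosing $A$ large enough (depending on $d$ and $\kappa$) then forces $\log R$ past the required threshold and closes the argument.

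Part (b) follows by applying (a) pointwise in time to $f=\th(t,\cdot)$ and taking the contrapositive. Incompressibility of $u$ conserves $\normlp{\th(t,\cdot)}$ for every $1\le p\le\infty$; on $\Tor^d$ with $\th_0$ valued in $\{\pm 1\}$ all relevant $L^p$ norms reduce to fixed constants depending only on $\abs{\Tor^d}$, so the ratios $\normltwo{\th_0}^2/(\normlone{\th_0}\normlinf{\th_0})$ and $\normlone{\th_0}^2/\normltwo{\th_0}^2$ can be absorbed into $A$. Theorem~\ref{thm:mainthm}(c) then provides
\[
\calV(\th(t,\cdot))\le\calV(\th_0)+C(d,p)\int_0^t\normlp{\nabla u(t',\cdot)}\,dt',
\]
and plugging this into the estimate from (a) yields the claimed exponential lower bound on $\eps(t)$.

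The main technical obstacle is extracting a clean tail bound in part (a) purely in terms of $\calV(f)/\normltwo{f}^2$: the low-frequency defect $\int_{\abs{\xi}<1}\abs{\log\abs{\xi}}\,\abss{\hat{f}}^2\,d\xi$ must be absorbed into the constant $A$, which is automatic in the periodic $\pm 1$-valued setting of part (b) (where $\normlone{f}$, $\normltwo{f}$, and $\normlinf{f}$ are all comparable), but would require extra normalization or a more delicate cutoff choice to state cleanly in the full generality of (a).
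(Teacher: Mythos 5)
Your overall strategy for both parts is the same as the paper's: reduce the $L^{\infty}$ bound to an $L^2$ bound via $\normltwo{g}^2\le\normlinf{g}\normlone{g}$ and Young's inequality, pass to Fourier space using that $\abss{\hat{\chi}}$ is bounded below near the origin, control the high-frequency tail of $\abss{\hat{f}}^2$ by a Chebyshev-type argument with weight $\log\abs{\xi}$, and then feed Theorem~\ref{thm:mainthm}(c) into part (a), using that all $L^q$ norms of a $\pm1$-valued $\th(t,\cdot)$ on the torus are conserved and equal to $1$.

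The ``technical obstacle'' you flag in part (a) is genuine, and you should know that the paper's own proof does not actually resolve it either: the paper's chain of inequalities ends with
\[
\int_{\abs{\xi}>R}\abss{\hat{f}}^2\,d\xi\le\frac{1}{\log R}\int_{\abs{\xi}>R}\log\abs{\xi}\,\abss{\hat{f}}^2\,d\xi\le\frac{\calV(f)}{\log R},
\]
and the last step silently discards $\int_{\abs{\xi}\le R}\log\abs{\xi}\,\abss{\hat{f}}^2\,d\xi$, which contains the negative contribution from $\abs{\xi}<1$ and therefore need not be nonnegative. Your version keeps that defect explicitly and bounds it by $\lesssim_d\normlone{f}^2$, but as you observe this cannot be absorbed into a constant $A(d,\kappa)$, since $\normlone{f}/\normltwo{f}$ is unbounded over $\Schwartz$. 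So neither your argument nor the paper's closes part (a) exactly as stated; the clean fix is to replace $\calV(f)$ in the hypothesis by $\int\max\{\log\abs{\xi},0\}\,\abss{\hat{f}}^2\,d\xi$, or to let $A$ depend additionally on $\normlone{f}^2/\normltwo{f}^2$. You are also right that this is harmless for part (b), which is the only place part (a) is used: on the torus the relevant functional is $\widetilde{\calV}(f)=\sum_{k\ne 0}\log\abs{k}\,\abss{\hat{f}(k)}^2$, every nonzero frequency has $\abs{k}\ge 1$, the weight is nonnegative, and the Chebyshev step goes through verbatim. With that caveat recorded, your proposal delivers the proposition's actual content.
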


\begin{remark}\label{rem:torus}
For part~\emph{b)} of Prop.~\ref{prop:exp_decay_geometric} we consider the dynamic on the torus $\Tor^d$. To be rigorous we need to define a version $\widetilde{\calV}$ of the functional $\calV$ which acts on (smooth) functions defined on the torus
\[
\widetilde{\calV}(f) = \sum_{k\in\Integ^d\setminus\{0\}} \log\abs{k}\,\abss{\hat{f}(k)}^2
\]
where
\[
\hat{f}(k) = \int_{\Tor^d}e^{-2i\pi k\cdot x}f(x)\,dx
\]
We have the same bounds on $\widetilde{\calV}$ as for $\calV$ in Theorem~\ref{thm:mainthm}, since we can compute the time-derivative similarly to Lemma~\ref{lemma:time_derivative_V} (which only uses integrations by part) and the harmonic analysis estimate from~\cite{seeger} is also valid for $\widetilde{\calV}$.
\end{remark}

We now prove both propositions.

\begin{proof}[Proof of Prop.~\ref{prop:exp_decay_functional}]
Let us consider the probability measure
\[
d\mu(\xi)=\abss{\hat{f}(\xi)}^2/\normltwo{f}^2\,d\xi
\]
Then
\begin{align*}
\exp\big(-2s\,\,\calV(f)/\normltwo{f}^2\big) &= \exp\left(-2s\,\,\int\log\abs{\xi}\,d\mu(\xi)\right)\\
 &= \exp\left(\int\log\big(\abs{\xi}^{-2s}\big)\,d\mu(\xi)\right)\\
\text{(Jensen inequality)} &\le \int\abs{\xi}^{-2s}\,d\mu(\xi) = \frac{\normhms{f}^2}{\normltwo{f}^2}
\end{align*}
This proves $a)$. Combining it with Theorem~\ref{thm:mainthm} proves $b)$.
\end{proof}

\begin{proof}[Proof of Prop.~\ref{prop:exp_decay_geometric}]
Let $0<\eta<1$, then there exists a small $\rho>0$ such that for all $\xi\in\Rd$
\begin{equation} \label{eq:lower_bound_chi}
\abs{\hat{\chi}(\xi)} \ge \sqrt{\eta}\,\ind_{B_{\rho}(0)}(\xi)
\end{equation}
where $B_{\rho}(0)$ is the ball of radius $\rho$ centered at $0$. Indeed $\hat{\chi}(0)=\int\chi(x)\,dx=1$ (see Appendix for the definition of the Fourier transform) and $\hat{\chi}$ is continuous at $0$ (in fact $\hat{\chi}(\xi)=\frac{1}{\abss{B_1(0)}}J_{d/2}(2\pi\xi)/\abs{\xi}^{d/2}$ where $J_{\nu}$ is the classical Bessel function of order $\nu$, see for instance~\cite[Sec.~B.4]{grafakosclassicalbook}).

Let $B>1$, consider a Schwartz function $f$ such that $\calV(f)>0$ and let $0<\eps <\rho \exp\big(-B\calV(f)/\normltwo{f}^2\big)$. We have 
\[
\normltwo{f\conv\chieps}^2 \le \normlinf{f\conv\chieps}\normlone{f}
\]
using Hölder's inequality followed by Young's inequality, since $\normlone{\chieps}=\normlone{\chi}=1$. On the other hand $\normltwo{f\conv\chieps}^2 = \int\abss{\hat{\chi}(\eps\xi)}^2\,\abss{\hat{f}(\xi)}\,d\xi$ and using the previous lower bound~\eqref{eq:lower_bound_chi} on $\hat{\chi}$ we have
\begin{align*}
\normltwo{f\conv\chieps}^2  & \ge \eta\int_{\abs{\xi}\le \eps^{-1}\rho} \abss{\hat{f}(\xi)}^2\,d\xi\\
 & \ge \eta\left(\normltwo{f}^2 - \int_{\abs{\xi}>\eps^{-1}\rho} \abss{\hat{f}(\xi)}^2\,d\xi\right)
\end{align*}
Because of the way $\eps$ was chosen, $\set{\abs{\xi} > \eps^{-1}\rho} \subset \set{\abs{\xi} > \exp\big(B\calV(f)/\normltwo{f}^2\big)}$. Consequently we can bound the total mass of the high frequencies
\begin{align*}
\int_{\abs{\xi}>\eps^{-1}\rho} \abss{\hat{f}(\xi)}^2\,d\xi & \le \int_{\abs{\xi} > \exp(B\calV(f)/\normltwo{f}^2)} \abss{\hat{f}(\xi)}^2\,d\xi\\
 & \le \int_{\abs{\xi} > \exp(B\calV(f)/\normltwo{f}^2)} \frac{\log\abs{\xi}}{B\calV(f)/\normltwo{f}^2}\abss{\hat{f}(\xi)}^2\,d\xi \\
 & \le \frac{\normltwo{f}^2}{B}
\end{align*}
Note that we used $\calV(f)>0$. We deduce that
\[
\normltwo{f\conv\chieps}^2 \ge \eta \, (1-1/B)\normltwo{f}^2
\]
which implies
\[
\frac{\normlinf{f\conv\chieps}}{\normlinf{f}} \ge \eta \,\,  (1-1/B) \frac{\normltwo{f}^2}{\normlone{f}\normlinf{f}}
\]
Choosing at the beginning $\eta > 1-\kappa$ and $B$ big enough such that $\eta \,(1-1/B) > 1-\kappa$ and finally $A=\max\{B,\rho^{-1}\}$ proves part \emph{a}). 

Let us now prove part \emph{b}). On the torus $\Tor^d$, if $\th_0$ only has values $\pm 1$ then all the $L^q$ norms of $\th(t,\cdot)$ are equal to $1$. Thus part~\emph{a}) implies that if $\eps< A^{-1}\exp\big(-A\calV(\th(t,\cdot))\big)$ then 
\[
\frac{\normlinf{\th(t,\cdot)\conv\chieps}}{\normlinf{\th(t,\cdot)}} > 1-\kappa
\]
Thus, the definition of the geometric mixing scale $\eps(t)$ implies that
\[
\eps(t) \ge  A^{-1}\exp\big(-A\,\calV(\th(t,\cdot))\big)
\]
Combining this last bound with Theorem~\ref{thm:mainthm} proves~\emph{b}).
\end{proof}

\subsection{Blowup of positive fractional Sobolev norm} \label{sect:blowup_sobolev}
In this section we assume that the velocity field $u$ is bounded in $\dot{W}^{1,2}$ uniformly in time. By incompressibility the $L^2$ norm of $\th(t,\cdot)$ is constant in time
\[
\normltwo{\th(t,\cdot)} = \normltwo{\th_0}
\]
On the other hand, in~\cite{acm} the authors construct a solution $\th(t,\cdot)$ whose $\dot{H}^s$ norm blows up. More precisely, the initial value $\th_0$ is in $C^{\infty}_c(\Rd)$ and the solution $\th(t,\cdot)$ does not belong to $\dot{H}^s(\Rd)$ for any $s>0$ and $t>0$. 

Theorems~\ref{thm:mainthm} and~\ref{thm:W} give us insight on the blow up of positive fractional Sobolev norms by providing intermediate results between the conservation of the $L^2$ norm and the (possible) blow up of the $\dot{H}^s$ norm:

If $u$ is bounded in $\dot{W}^{1,2}$ uniformly in time, there exist various constants $C>0$ (denoted by the same letter for readability) such that for all $t\ge 0$
\begin{align}
&\int\abss{\hat{\th}(t,\xi)}^2\,d\xi  = C\\
&\int\log\abs{\xi}\,\abss{\hat{\th}(t,\xi)}^2\,d\xi \le C (1+t)\\
&\int(\log\abs{\xi})^2\,\abss{\hat{\th}(t,\xi)}^2\,d\xi \le C (1+t)^2\\
&\int\abs{\xi}^{2s}\,\abss{\hat{\th}(t,\xi)}^2\,d\xi\text{ can blow up, for any }s>0
\end{align}

We can add one more item to the list by recalling from Section~\ref{sec:functionals} that for small $s$,
\[
\norm{f}_{\dot{H}^s}^2 = \normltwo{f}^2 + 2s\,\calV(f) + 2s^2\,\calW(f) + O(s^3)
\]
Evidently, the first terms of the expansion of $\norm{\th(t,\cdot)}_{\dot{H}^s}^2$ as $s\to 0$ do not blow up.

\subsection{Linear growth of $\calV\big(\th(t,\cdot)\big)$ is sharp}
In this section we show that the linear growth of $\calV\big(\th(t,\cdot)\big)$ in Theorem~\ref{thm:mainthm} is sharp, i.e. there is an initial distribution $\th_0$ and a velocity field $u$ bounded in $\dot{W}^{1,p}$ uniformly in time such that $\calV\big(\th(t,\cdot)\big)$ grows linearly. To show this we use the results in~\cite{acm}, where the authors prove that the exponential lower bound on the functional mixing scale $\normhmo{\th(t,\cdot)}$ is sharp. We have

\begin{prop} \label{prop:linear_growth_V}
On the two-dimensional torus $\Tor^2$, for any $p>1$ there exists a velocity field $u$ bounded in $\dot{W}^{1,p}$ uniformly in time and a solution $\th$ to~\eqref{eq:conservation_of_mass} such that
\[
\text{For } n\le t< n+1,\quad\calV\big(\th(t,\cdot)\big) = \calV\big(\th(t-n,\cdot)\big) + n\log(\lambda^{-1})\normltwo{\th_0-\bar{\th}_0}^2
\]
where $\bar{\th}_0=\int\th_0(x)\,dx$ is the average of $\th_0$. Thus $\calV\big(\th(t,\cdot)\big)$ grows linearly
\[
\calV\big(\th(t,\cdot)\big) \ge m + (t-1)\log(\lambda^{-1}) \normltwo{\th_0-\bar{\th}_0}^2 \quad\text{with }m=\inf_{0\le t\le 1}\calV\big(\th(t,\cdot)\big)
\]
\end{prop}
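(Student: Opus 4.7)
The strategy is to invoke the exponential self-similar mixing construction of Alberti, Crippa and Mazzucato~\cite{acm} on $\Tor^2$. The first step is to extract from~\cite{acm} the following fact: for every $p>1$ there is a divergence-free velocity field $u$ uniformly bounded in $\dot{W}^{1,p}$ and an integer $N \ge 2$ (setting $\lambda = 1/N$) such that the corresponding solution $\th$ of~\eqref{eq:conservation_of_mass} satisfies the self-similar relation
\[
\th(n+s, x) - \bar{\th}_0 = \big(\th(s, \cdot) - \bar{\th}_0\big)(x/\lambda^n) \qquad (0 \le s < 1,\ x \in \Tor^2,\ n \in \Integ_{\ge 0}).
\]
The rescaling $x \mapsto x/\lambda^n$ makes sense on $\Tor^2$ precisely because $\lambda^{-n} = N^n \in \Integ$, and the mean $\bar{\th}_0$ is conserved by the incompressible flow.

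The second step is a Fourier scaling identity for the toral $\calV$ of Remark~\ref{rem:torus}. For a smooth mean-zero $g$ on $\Tor^2$, setting $g_{\lambda}(x) = g(x/\lambda) = g(Nx)$ gives $\hat{g_{\lambda}}(\ell) = \hat{g}(\ell/N)$ when $N$ divides each component of $\ell$ and zero otherwise, hence
\[
\calV(g_{\lambda}) = \sum_{k \ne 0} \log|Nk|\,\abss{\hat g(k)}^2 = \calV(g) + \log(\lambda^{-1}) \, \normltwo{g}^2.
\]
Since $\calV$ ignores the $k=0$ mode and is thus unaffected by adding a constant, I would apply this identity to $g(t,\cdot) := \th(t,\cdot) - \bar{\th}_0$ at scale $\lambda^n$, using $\normltwo{g(s,\cdot)} = \normltwo{\th_0 - \bar{\th}_0}$ (conservation of $L^2$ under incompressibility), to obtain
\[
\calV\big(\th(n+s,\cdot)\big) = \calV\big(\th(s,\cdot)\big) + n \log(\lambda^{-1}) \, \normltwo{\th_0 - \bar{\th}_0}^2,
\]
which is exactly the claimed equality. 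Setting $m = \inf_{0 \le s \le 1} \calV(\th(s,\cdot))$ (finite because $\th$ stays smooth on $[0,1]$) and using $n \ge t - 1$ together with $\log(\lambda^{-1}) > 0$ yields the stated linear lower bound.

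The only real obstacle is bookkeeping: \cite{acm} is formulated in terms of the geometric and $\dot H^{-1}$ mixing scales rather than an explicit self-similarity formula, so one has to verify that the shear-block construction there can be arranged so that the density satisfies precisely the spatial rescaling identity displayed above from one period to the next, and that the accompanying velocity field remains uniformly bounded in $\dot W^{1,p}$ for every $p>1$. Both are built into the construction of~\cite{acm}; once they are in hand, the rest of the proof reduces to the elementary Fourier computation written out above.
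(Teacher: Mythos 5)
Your proposal is correct and follows essentially the same route as the paper: it invokes the self-similar relation $\th(t,x)=\th(t-n,x/\lambda^{n})$ from Alberti--Crippa--Mazzucato, computes how the Fourier coefficients rescale under $x\mapsto x/\lambda^{n}$ on the torus, and reads off the additive increment $\log(\lambda^{-1})\normltwo{\th_0-\bar{\th}_0}^2$ for the toral functional $\widetilde{\calV}$. The only cosmetic difference is that you subtract the mean before rescaling, while the paper keeps $\th$ itself and lets the variance term $\int\th^2-(\int\th)^2$ appear at the end; these are equivalent since $\calV$ ignores the zero mode.
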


\begin{remark}
Here we consider the dynamic on the torus $\Tor^2$, like in Prop.~\ref{prop:exp_decay_geometric}. To be rigorous we then need to consider the functional $\widetilde{\calV}$ and not $\calV$ (see Remark~\ref{rem:torus}). 
\end{remark}

\begin{proof}[Proof of Prop.~\ref{prop:linear_growth_V}]
We use the following result from~\cite{acm} 
\begin{fact*}[{Alberti, Crippa, Mazzucato '14~\cite[Sect~.1]{acm}}]
On the two-dimensional torus $\Tor^2$, there exists a velocity field $u$ bounded in $\dot{W}^{1,p}(\Tor^2)$ and a solution $\th$ to~\eqref{eq:conservation_of_mass} such that for all integers $n \ge 0$, 
\[
\text{If } n\le t<n+1, \quad \th(t,x) = \th\left(t-n,\frac{x}{\lambda^n}\right)
\]
where $1/\lambda$ is a positive integer.
\end{fact*}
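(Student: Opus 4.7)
The plan is to use the Alberti--Crippa--Mazzucato self-similar solution quoted in the fact as a black box, and reduce the proposition to a single computation: how the functional $\widetilde{\calV}$ on $\Tor^2$ behaves under the spatial rescaling $f \mapsto f(\cdot/\lambda)$, where $N := \lambda^{-1}$ is a positive integer. Once the right scaling identity is established, the conclusion follows by iteration $n$ times, invoking the conservation of the $L^2$ norm and of the mean under the incompressible transport equation~\eqref{eq:conservation_of_mass}.

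The core step is the following claim: for any sufficiently regular $f \colon \Tor^2 \to \Real$,
\[
\widetilde{\calV}\big(f(\cdot/\lambda)\big) = \widetilde{\calV}(f) + \log(\lambda^{-1})\,\normltwo{f - \bar{f}}^2 .
\]
To prove it, I would compute the Fourier coefficients of $g(x) := f(x/\lambda) = f(Nx)$ on $\Tor^2$. Splitting $\Tor^2$ into the $N^2$ translates of $\lambda\Tor^2$ and changing variables, for $k \in \Integ^2$ the only contributions survive when $k/N \in \Integ^2$: explicitly $\hat{g}(Nj) = \hat{f}(j)$ for $j \in \Integ^2$, and $\hat{g}(k) = 0$ if $N \nmid k$ (the sum $\sum_{m \in \{0,\dots,N-1\}^2} e^{-2\pi i k\cdot m/N}$ vanishes otherwise). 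Plugging into the definition of $\widetilde{\calV}$ and using $\log|Nj| = \log N + \log|j|$ gives
\[
\widetilde{\calV}(g) = \sum_{j \neq 0} \bigl(\log N + \log|j|\bigr)\,\abss{\hat{f}(j)}^2 = \log(\lambda^{-1})\sum_{j \neq 0} \abss{\hat{f}(j)}^2 + \widetilde{\calV}(f) ,
\]
and Plancherel identifies the first sum with $\normltwo{f - \bar{f}}^2$, proving the claim.

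Now I would iterate. For $n \le t < n+1$, the ACM fact gives $\th(t,x) = \th(t-n,\lambda^{-n} x)$, i.e.\ $\th(t,\cdot) = S_\lambda^n \th(t-n,\cdot)$ where $S_\lambda f := f(\cdot/\lambda)$. The scaling identity applied $n$ times yields
\[
\widetilde{\calV}\big(\th(t,\cdot)\big) = \widetilde{\calV}\big(\th(t-n,\cdot)\big) + \log(\lambda^{-1}) \sum_{k=0}^{n-1} \normltwo{S_\lambda^k\th(t-n,\cdot) - \overline{S_\lambda^k\th(t-n,\cdot)}}^2 .
\]
Each term in the sum equals $\normltwo{\th_0 - \bar{\th}_0}^2$: the rescaling $S_\lambda$ preserves both the mean and the $L^2$ norm on $\Tor^2$ (a direct change of variables using $N^2\lambda^2 = 1$), and the incompressible transport preserves them as well (the mean because $\div u = 0$, the $L^2$ norm as a standard consequence). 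This gives the stated equality, and setting $m := \inf_{0 \le s \le 1}\widetilde{\calV}(\th(s,\cdot))$ and using $n \ge t-1$ yields the claimed linear lower bound.

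The computation is essentially algebraic once one sets up the Fourier side correctly, so there is no serious analytic obstacle; the only place demanding care is the Fourier-coefficient identification under the rescaling, namely verifying that only frequencies in $N\Integ^2$ survive and that $\hat{S_\lambda f}(Nj) = \hat{f}(j)$ with the right normalisation. One should also remember (as flagged in Remark~\ref{rem:torus}) that all of this is carried out with the torus version $\widetilde{\calV}$ rather than $\calV$, which is why the discrete Fourier argument above is appropriate.
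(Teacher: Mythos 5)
The statement you were given is not something the paper proves: it is an external result imported verbatim from Alberti--Crippa--Mazzucato, quoted inside the proof of Proposition~\ref{prop:linear_growth_V} with only a citation to \cite{acm}. Your proposal likewise takes it as a black box, so as a proof of the Fact itself it is empty --- you never construct the velocity field or establish the self-similar structure $\th(t,x)=\th(t-n,x/\lambda^n)$, which is the entire content of the ACM construction. What you do instead is prove the enclosing Proposition~\ref{prop:linear_growth_V}, and there your argument coincides with the paper's: the identification $\hat{g}(Nj)=\hat{f}(j)$ with $\hat{g}(k)=0$ for $N\nmid k$, the splitting $\log\abs{Nj}=\log N+\log\abs{j}$, and the Plancherel identification of $\sum_{j\neq 0}\abss{\hat f(j)}^2$ with $\normltwo{f-\bar f}^2$ are exactly the steps in the paper's proof (your iteration via $S_\lambda^n$ and the conservation of mean and $L^2$ norm is a slightly more explicit bookkeeping of the same computation). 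So: correct and essentially identical to the paper if the target is Proposition~\ref{prop:linear_growth_V}; a genuine gap if the target is the Fact itself, since neither you nor the paper supplies its proof.
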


Then for $n\le t<n+1$ and $k\in\Integ^d$ we have 
\[
\hat{\th}(t,k) = \begin{cases} \hat{\th}(t-n,\lambda^n k)\quad\text{if}\,k\in\lambda^{-n}\Integ^2\\
							   0\quad\text{otherwise}
			 	 \end{cases}
\]
thus, defining $\widetilde{\calV}$ like in Remark~\ref{rem:torus}:
\[
\widetilde{\calV}(f) = \sum_{k\in\Integ^d\setminus\{0\}} \log\abs{k}\,\abss{\hat{f}(k)}^2
\]
we have for $n\le t<n+1$
\[
\widetilde{\calV}\big(\th(t,\cdot)\big) = \widetilde{\calV}\big(\th(t-n,\cdot)\big) + n\log(\lambda^{-1})\left(\int_{\Tor^2}\th(t-n,x)^2\,dx - \left(\int_{\Tor^2} \th(t-n,x)\,dx\right)^2\right)
\]
which concludes the proof.
\end{proof}

\section*{Acknowledgements}
The author would like to thank Nader Masmoudi for suggesting the problem, Pierre Germain for offering helpful comments and Robert Kohn for his invaluable help in organizing the present work. Support is gratefully acknowledged from NSF grants DMS-1211806 and DMS-1311833.

\section{Appendix}

\subsection{Fourier Transform of $log$}

We use the following Fourier transform: for functions $f$ in the Schwarz class $\Schwartz(\Rd)$ the Fourier transform $\FT(f)$ or $\hat{f}$ of $f$ is
\[
\hat{f}(\xi) = \int_{\Rd}e^{-2i\pi\xi\cdot x}f(x)\,dx
\]

Let us now define the tempered distribution $T\in\Schwartz'(\Rd)$ by
\[
\bracket{T,f} = \int_{\abs{x}\le 1}\frac{f(x)-f(0)}{\abs{x}^d}\,dx + \int_{\abs{x}>1}\frac{f(x)}{\abs{x}^d}\,dx
\]
for any $f\in\Schwartz(\Rd)$. 
\begin{prop}
The Fourier transform of $T$ is
\[
\bracket{\hat{T},\psi} = \int\big(\zeta_d-\sigma_{d-1}\log\abs{\xi}\big)\,\psi(\xi)\,d\xi
\]
for any $\psi\in\Schwartz(\Rd)$, where $\sigma_{d-1}$ is the surface area of the unit sphere in $\Rd$ $\Sph^{d-1}$ and $\zeta_d$ is a constant.
\end{prop}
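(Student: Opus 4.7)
The plan is to introduce a one-parameter analytic family of tempered distributions related to $T$ by an explicit singular correction, compute its Fourier transform via the classical Riesz-potential identity, and then extract the limit $s\to 0$ on both sides to identify $\hat T$.

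For $0<s<d$ the function $\abs{x}^{s-d}$ is locally integrable and defines a tempered distribution. Splitting the pairing over $\abs{x}\le 1$ and $\abs{x}>1$ and computing $\int_{\abs{x}\le 1}\abs{x}^{s-d}\,dx = \sigma_{d-1}/s$ in polar coordinates, I would write
\[
\bracket{\abs{x}^{s-d},f} = \int_{\abs{x}\le 1}\frac{f(x)-f(0)}{\abs{x}^{d-s}}\,dx + \int_{\abs{x}>1}\frac{f(x)}{\abs{x}^{d-s}}\,dx + \frac{\sigma_{d-1}}{s}f(0).
\]
The first two terms depend analytically on $s$ near $0$ and, by dominated convergence (using $\abs{f(x)-f(0)}\lesssim\abs{x}$ near the origin and the Schwartz decay at infinity), converge to $\bracket{T,f}$ as $s\to 0^+$. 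Hence the distribution $\abs{x}^{s-d}$ has a simple pole at $s=0$ with residue $\sigma_{d-1}\delta_0$, whose finite part at $s=0$ is exactly $T$.

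On the Fourier side, the classical Riesz-potential formula (with the paper's convention $\hat f(\xi)=\int f(x)e^{-2i\pi\xi\cdot x}\,dx$) gives
\[
\widehat{\abs{x}^{s-d}}(\xi) = \pi^{d/2-s}\,\frac{\Gamma(s/2)}{\Gamma((d-s)/2)}\,\abs{\xi}^{-s}.
\]
Using the Laurent expansion $\Gamma(s/2)=2/s-\gamma+O(s)$, Taylor expansions of $\pi^{d/2-s}$ and $1/\Gamma((d-s)/2)$, and the identity $\sigma_{d-1}=2\pi^{d/2}/\Gamma(d/2)$, the scalar prefactor expands as $\sigma_{d-1}/s+\zeta_d+O(s)$ for an explicit $\zeta_d$ depending only on $d$. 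Combining with $\abs{\xi}^{-s}=1-s\log\abs{\xi}+O(s^2(\log\abs{\xi})^2)$ yields, in the distributional sense,
\[
\widehat{\abs{x}^{s-d}}(\xi) = \frac{\sigma_{d-1}}{s} + \big(\zeta_d-\sigma_{d-1}\log\abs{\xi}\big) + O(s).
\]

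Finally, I would pair both expressions with an arbitrary $\psi\in\Schwartz(\Rd)$ via $\bracket{\widehat{\abs{x}^{s-d}},\psi}=\bracket{\abs{x}^{s-d},\hat\psi}$. Since $\hat\psi(0)=\int\psi$, the two singular $\sigma_{d-1}/s$ contributions on the two sides match and cancel exactly, and sending $s\to 0^+$ produces the claimed identity $\bracket{\hat T,\psi}=\int(\zeta_d-\sigma_{d-1}\log\abs{\xi})\psi(\xi)\,d\xi$. The main subtle point is the uniformity in $s$ needed to pass the limit under both pairings and to justify the distributional expansion of $\widehat{\abs{x}^{s-d}}$: this is routine because the remainders coming from $\abs{x}^{s-d}\hat\psi(x)$ on the physical side and $s^2(\log\abs{\xi})^2\psi(\xi)$ on the Fourier side are integrable with bounds uniform for $s$ in a small real neighborhood of $0$ (near $\xi=0$ one uses that $(\log\abs{\xi})^2$ is locally integrable in any dimension). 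The only nontrivial external input is the Riesz-potential Fourier identity itself, which is standard and obtainable by analytic continuation of the Gaussian identity $\widehat{e^{-\pi t\abs{x}^2}}=t^{-d/2}e^{-\pi\abs{\xi}^2/t}$.
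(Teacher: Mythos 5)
Your argument is correct, but it takes a genuinely different route from the paper's. The paper computes $\bracket{T,\hat\psi}$ directly: it passes to spherical coordinates in $x$, invokes the Bessel identity $\int_{\Sph^{d-1}}e^{-2i\pi\xi\cdot r\om}\,d\sigma(\om)=(2\pi)^{d/2}\tilde{J}_{d/2-1}(2\pi r\abs{\xi})$, swaps integrals and rescales the radial variable, so that $-\sigma_{d-1}\log\abs{\xi}$ emerges from $\int_0^{\abs{\xi}}(\cdots)\,ds/s$ and $\zeta_d$ is obtained as an explicit Bessel integral. You instead embed $T$ in the analytic family of Riesz potentials $\abs{x}^{s-d}$, identify $T$ as the finite part at the simple pole $s=0$ (residue $\sigma_{d-1}\delta_0$, using $\int_{\abs{x}\le 1}\abs{x}^{s-d}\,dx=\sigma_{d-1}/s$), and read off $\hat T$ from the Laurent expansion of $\widehat{\abs{x}^{s-d}}=\pi^{d/2-s}\,\Gamma(s/2)\,\Gamma((d-s)/2)^{-1}\abs{\xi}^{-s}$; the two $\sigma_{d-1}/s$ poles cancel under the pairing because $\hat\psi(0)=\int\psi$. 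Each approach rests on exactly one classical input (the Bessel identity there, the Riesz formula here); yours is the more conceptual one --- it realizes $-\log\abs{\xi}$ as $\partial_s\abs{\xi}^{-s}|_{s=0}$, which dovetails with the paper's own observation in Section~\ref{sec:functionals} that $\calV$ is the $O(s)$ coefficient of $\norm{f}_{\dot H^s}^2$, and it yields $\zeta_d$ in closed form via $\gamma$, $\log\pi$ and the digamma function rather than as a Bessel integral. One small point to tighten: the pointwise expansion $\abs{\xi}^{-s}=1-s\log\abs{\xi}+O(s^2(\log\abs{\xi})^2)$ is not uniform in $\xi$ (the true remainder carries an extra factor $\max(\abs{\xi}^{s},\abs{\xi}^{-s})$), but for the dominated-convergence step all you need is the bound $\abs{s^{-1}(\abs{\xi}^{-s}-1)+\log\abs{\xi}}\lesssim s\,(\log\abs{\xi})^2\big(\abs{\xi}^{s_0}+\abs{\xi}^{-s_0}\big)$ for $0<s\le s_0<d$, whose product with a Schwartz function is integrable; so this is a cosmetic fix, not a gap.
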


\begin{proof}
We adapt the proof of~\cite[§9,8.(d)]{vladimirov}.
Let $\psi$ be a test function in $\Schwartz$. Then
\begin{align*}
\bracket{\hat{T},\psi} = \bracket{T,\hat{\psi}} &= \int_{\abs{x}\le 1} \frac{1}{\abs{x}^d} \, \big(\hat\psi(x)-\hat\psi(0)\big)\,dx + \int_{\abs{x}>1}\frac{1}{\abs{x}^d}\,\hat{\psi}(x)\,dx\\
 & = \int_{\abs{x}\le 1} \frac{1}{\abs{x}^d}\int_{\xi}\big(e^{-2i\pi x\cdot\xi}-1\big)\psi(\xi)\,d\xi\,dx + \int_{\abs{x}>1} \frac{1}{\abs{x}^d}\int_{\xi}e^{-2i\pi x\cdot\xi}\,\psi(\xi)\,d\xi\,dx \\
 &= A+B
\end{align*}

To compute further we write the $x$ integral in spherical coordinates $x=r\om$, with $r>0$ and $\om\in\Sph^{d-1}$ the unit sphere in $\Real^d$. Then the first term can be written
\begin{align*}
A &= \int_{r=0}^1\int_{\om\in\Sph^{d-1}}\frac{1}{r^d} \int_{\xi} \big(e^{-2i\pi x\cdot\xi}-1\big)\psi(\xi)\,d\xi\,d\sigma(\om)\,r^{d-1}\,dr\\
  &= \int_{r=0}^1\frac{1}{r}\int_{\xi} \int_{\om}\big(e^{-2i\pi \xi\cdot r\om}-1\big)\,\psi(\xi)\,d\sigma(\om)\,d\xi\,dr
\end{align*}
where we have swapped the integrals in $\om$ and $\xi$. We have (see~\cite[Sec.~B.4.]{grafakosclassicalbook})
\[
\int_{\Sph^{d-1}}e^{-2i\pi \xi\cdot r\om}\,d\sigma(\om) = (2\pi)^{d/2} \tilde{J}_{\frac{d}{2}-1}(2\pi r\abs{\xi})
\]
where $\tilde{J}_{\nu}(s)=s^{-\nu}J_{\nu}(s)$ and $J_{\nu}$ is the classical Bessel function of order $\nu$. 

Continuing the computation,
\begin{align*}
A &= \int_0^1\frac{1}{r}\int\psi(\xi)\,\Big((2\pi)^{d/2} \tilde{J}_{\frac{d}{2}-1}(2\pi r\abs{\xi})-\sigma_{d-1}\Big)\,d\xi\,dr\\
  &= \int\psi(\xi)\int_0^{\abs{\xi}}\Big((2\pi)^{d/2} \tilde{J}_{\frac{d}{2}-1}(2\pi s)-\sigma_{d-1}\Big)\,\frac{ds}{s}\,d\xi
\end{align*}
where we swapped the integrals in $r$ and $\xi$ and did a change of variables in the $r$ integral $s=r\abs{\xi}$.

Similarly for the second term
\[
B = \int\psi(\xi)\int_{\abs{\xi}}^{\infty}(2\pi)^{d/2} \tilde{J}_{\frac{d}{2}-1}(2\pi s)\,\frac{ds}{s}\,d\xi
\]

Putting together $A$ and $B$ we get 
\begin{equation*}
A+B = \int\psi(\xi)\big(\zeta_d - \sigma_{d-1}\log\abs{\xi}\big)\,d\xi
\end{equation*}
with
\[
\zeta_d = \int_0^1\Big((2\pi)^{d/2}\tilde{J}_{\frac{d}{2}-1}(2\pi s) - \sigma_{d-1}\Big)\,\frac{ds}{s} + \int_1^{\infty}(2\pi)^{d/2}\tilde{J}_{\frac{d}{2}-1}(2\pi s)\,\frac{ds}{s}
\]
\end{proof}

\bibliographystyle{plain}
\bibliography{../biblio/bib/general_mixing,../biblio/bib/calderon_commutators}

\end{document}